\newtheorem{theorem}{Theorem}
\newtheorem{Lemm}[theorem]{Lemma}
\newtheorem{prop}[theorem]{Proposition}
\newtheorem{corollary}[theorem]{Corollary}
\newtheorem{foundationaltheorem}{\textbf{Theorem}}
\theoremstyle{remark}
\DeclareMathOperator{\BMOA}{BMOA}
\DeclareMathOperator{\VMOA}{VMOA}
\DeclareMathOperator{\BMO}{BMO}
\DeclareMathOperator{\VMO}{VMO}
\DeclareMathOperator{\QA}{QA}
\DeclareMathOperator{\A}{A (\mathbb{D})}
\title{Carleson Measures, Vanishing Mean Oscillation and Critical Points}
\author[C. Bellavita]{Carlo Bellavita}
\email{carlo.bellavita@gmail.com}
\email{carlobellavita@ub.edu}
\address{Departament de Matem\`atica i Inform\`atica, Universitat de Barcelona, Gran Via 585, 08007 Barcelona, Spain.}
\address{Current Address: Dipartimento di Matematica, Universit\'a degli Studi di Milano, Via C. Saldini 50, 20133, Milano, Italy}
\author[A. Nicolau]{Artur Nicolau}
\email{artur.nicolau@uab.cat}
\address{Departament de Matem\`atiques, Universitat Aut\`onoma de Barcelona and Centre de Recerca Matem\`atica, 08193 Barcelona, Spain.}
\author[G. Stylogiannis]{Georgios Stylogiannis}
\email{g.stylog@gmail.com}
\email{stylog@math.auth.gr}
\address{Department of Mathematics, Aristotle University of Thessaloniki, 54124 Thessaloniki, Greece.}
\subjclass{Primary: 30H35; Secondary: 30H10, 30H20, 30H50, 47G10}
\keywords{Carleson measures, Critical points, Generalized Volterra integral operators}
\begin{document}

\begin{abstract}
    Given a finite positive Borel measure $\mu$ in the open unit disc of the complex plane, we construct a bounded outer function $E$ whose boundary values have vanishing mean oscillation such that $|E| \mu$ is a vanishing Carleson measure. As an application it is shown that given any function in a Hardy space, there exists a bounded analytic function in the unit disc whose boundary values have vanishing mean oscillation, with the same critical points and multiplicities. 
\end{abstract}

\maketitle

\section{Introduction}
Let $\mathbb{D}$ be the open unit disc in the complex plane and let $\partial \mathbb{D}$ be the unit circle. With $dA$, respectively $dm$, we denote the normalized Lebesgue measure in $\mathbb{D}$, respectively in $\partial \mathbb{D}$. For $0<p<\infty$ let $\mathbb{H}^p$ be the Hardy space of analytic functions $F$ in $\mathbb{D}$ such that
\[
\|F\|_p^p = \sup_{0< r<1} \int_{\partial \mathbb{D}} |F(r \xi)|^p dm (\xi) < \infty,  
\]
and let $\mathbb{H}^\infty$ be the space of bounded analytic functions $F$ in $\mathbb{D}$ with 
$
\|F\|_\infty = \sup \{|F(z)| : z \in \mathbb{D}\}.
$
Any function $F \in \mathbb{H}^p$ with $0<p \leq \infty$ has radial limit, denoted by $F(\xi)$, at $m$-almost every point $\xi\in \partial \mathbb{D}$ and factors as $F=BE$ where $B$ is an inner function and $E$ an outer function. We recall that $B$ is called inner if $B \in \mathbb{H}^\infty$ and $|B(\xi)|=1$ for $m$-almost every $\xi \in \partial \mathbb{D}$ and that the outer function $E$ can be written as
\[
E(z) = \exp \Big( \int_{\partial \mathbb{D}} \frac{\xi + z}{\xi - z} h (\xi) dm (\xi) \Big), \quad z \in \mathbb{D}, 
\]
where $h$ is an integrable function in the unit circle. Actually $\log |E(\xi)| = h(\xi)$ for $m$-almost every $\xi \in \partial \mathbb{D}$. See Chapter II of \cite{garnett2006bounded}.

Given an arc $I \subset \partial \mathbb{D}$ of normalized length $ m(I) = |I|$, let $Q=Q(I) = \{z \in \mathbb{D} : |z| \geq 1- |I| , z/|z| \in I \}$ be the Carleson square based at $I$. It is customary to denote $\ell (Q) = |I|$. A finite positive Borel measure $\mu$ in $\mathbb{D}$ is called a Carleson measure if there exists a constant $C>0$ such that  
\[
\int_{\mathbb{D}} |F(z)|^pd\mu(z) \leq C \|F\|_p^p ,
\]
for any $F \in \mathbb{H}^p$. A celebrated result of Carleson says that $\mu$ is a Carleson measure if and only if there exists a constant $C_1 >0$ such that $\mu (Q) \leq C_1 \ell (Q)$ for any Carleson square $Q$. A Carleson measure $\mu$ is called a vanishing Carleson measure if
\[
\frac{\mu(Q)}{ \ell (Q)} \to 0 \, \, \text{as} \, \, \ell (Q) \to 0. 
\]
The average of an integrable function $h$ over an arc $I\subset \partial \mathbb D$, is denoted by  
\[
h_I = \fint_I h(\xi) dm(\xi) = \frac{1}{m(I)} \int_{I} h(\xi) dm(\xi) .
\]
An integrable function $h$ in $\partial \mathbb{D} $ is in BMO if 
\[
\|h\|_{\BMO} = \sup \fint_{I} |h(\xi) - h_I| dm(\xi) < \infty , 
\]
where the supremum is taken over all arcs $I \subset \partial \mathbb{D}$. The subspace of functions $h \in \BMO$ such that 
\[
\fint_{I} |h(\xi) - h_I| dm(\xi) \to 0\, \, \text{as} \, \, m(I) \to 0
\]
is denoted by $\VMO$ and coincides with the closure in the $\BMO$ semi-norm of the continuous functions on $\partial \mathbb{D}$. Given an integrable function $h$ in $\partial \mathbb{D}$ we denote by $h(z)$ with $z \in \mathbb{D}$, its harmonic extension to $\mathbb{D}$. Functions in $\BMO$ and Carleson measures are intimately related. Indeed, an integrable function $h$ is in $\BMO$, respectively in $\VMO$, if and only if $|\nabla h(z) |^2 (1-|z|^2) d A(z)$ is a Carleson, respectively vanishing Carleson, measure. See Chapter VI \cite{garnett2006bounded} for all these well known results.

Our work is inspired by the beautiful article \cite{Wolff} of T. Wolff who considered the algebra $\QA$ of bounded analytic functions whose boundary values are in $\VMO$. He proved the following deep result.
\begin{foundationaltheorem}{\cite[Theorem 1]{Wolff}}\label{Wolff theorem}
Given any bounded function $f$ in $\partial \mathbb{D}$, there exists an outer function $E \in \QA$ 
such that $Ef \in \VMO$.
 \end{foundationaltheorem}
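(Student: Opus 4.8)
The plan is to reduce everything to the construction of one outer multiplier and then to read that multiplier off from Carleson measure theory. First I would reduce to real-valued $f$: writing $f=f_1+if_2$ with $f_1,f_2$ real and bounded, if $E_1,E_2$ are outer functions in $\QA$ with $E_jf_j\in\VMO$, then $E=E_1E_2$ is again outer and lies in $\QA$, and since $\VMO\cap L^\infty$ is an algebra one checks that $Ef=E_2(E_1f_1)+iE_1(E_2f_2)\in\VMO$. So assume $f$ is real with $\|f\|_\infty\le1$, and let $u$ denote its harmonic extension to $\mathbb{D}$. Since $L^\infty\subset\BMO$, the measure
\[
d\mu(z)=|\nabla u(z)|^2(1-|z|^2)\,dA(z)
\]
is a Carleson measure encoding the oscillation of $f$ at every scale; the only obstruction to $f$ already lying in $\VMO$ is that $\mu$ need not be a \emph{vanishing} Carleson measure.

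The guiding idea is that $E$ should be small exactly where $f$ oscillates. Suppose we have a bounded outer function $E$ with $|E|\le1$, with $\log|E|\in\VMO$, and such that $|E|\,\mu$ is a vanishing Carleson measure. I claim this suffices. Consider the smooth, bounded, non-analytic extension $G(z)=E(z)u(z)$ on $\mathbb{D}$, which satisfies $\|G\|_\infty\le\|f\|_\infty$ and has nontangential boundary values equal to $Ef$ almost everywhere. Since $E$ is analytic and $u$ is harmonic, $\nabla G=u\,\nabla E+E\,\nabla u$, and therefore
\[
|\nabla G(z)|^2(1-|z|^2)\lesssim |\nabla E(z)|^2(1-|z|^2)+|E(z)|^2\,|\nabla u(z)|^2(1-|z|^2).
\]
For the first term, $\log|E|\in\VMO$ forces $E\in\VMO$: indeed $\operatorname{Re}\log E$ is the harmonic extension of $\log|E|$, so $|(\log E)'|^2(1-|z|^2)\,dA$ is vanishing Carleson by the characterization recalled above, and $|\nabla E|\lesssim|E|\,|(\log E)'|\le|(\log E)'|$. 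The second term equals $|E|^2\,d\mu\le|E|\,d\mu$ because $|E|\le1$, hence is vanishing Carleson by hypothesis. Thus $|\nabla G|^2(1-|z|^2)\,dA$ is a vanishing Carleson measure, and the converse half of the $\VMO$ characterization — which applies to any bounded smooth extension, not only the harmonic one — gives $Ef\in\VMO$.

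Everything therefore reduces to the genuinely hard step: given the Carleson measure $\mu$, produce a bounded outer function $E$ with $\log|E|\in\VMO$ and $|E|\,\mu$ vanishing Carleson. I expect this to be the main obstacle. The natural route is a multiscale stopping-time construction: select a family of Carleson squares on which the densities $\mu(Q)/\ell(Q)$ stay bounded away from $0$, and define $\psi=\log|E|\le0$ so that it drops by a controlled amount across each selected generation, making $e^{\psi}$ small precisely on those squares. The three requirements pull against one another: $\psi$ must decrease fast enough that $e^{\psi}\mu(Q)/\ell(Q)\to0$, slowly enough that $\psi\in\VMO$ (so that $E\in\QA$), and must remain integrable (so that $E$ is a genuine, non-null outer function). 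Reconciling sufficient decay with bounded mean oscillation and integrability, for an arbitrary Carleson measure, is where the real difficulty lies; once it is achieved, Wolff's theorem follows from the reduction above.
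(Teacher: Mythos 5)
Your starting point coincides with the paper's: apply Theorem \ref{Theorem 1}(a) to $\mu=|\nabla u|^2(1-|z|^2)\,dA$ to get an outer $E\in\QA$ with $\log|E|\in\VMO$ and $|E|\mu$ vanishing Carleson, and your observations that this forces $E\in\VMOA$ and that $|\nabla(Eu)|^2(1-|z|^2)\,dA\lesssim |E'|^2(1-|z|^2)\,dA+|E|\,d\mu$ is a vanishing Carleson measure are both correct. The fatal step is the last one: you invoke ``the converse half of the $\VMO$ characterization --- which applies to any bounded smooth extension, not only the harmonic one.'' No such converse exists. The equivalence between $g\in\VMO$ and the vanishing Carleson property of $|\nabla G|^2(1-|z|^2)\,dA$ is a statement about the \emph{harmonic} extension $G=P[g]$ (its proof uses $\Delta G=0$ through Green's theorem), and for general smooth bounded extensions it fails as badly as possible. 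Indeed, take \emph{any} bounded measurable $g$ and set $G(re^{i\theta})=(g*\phi_{\varepsilon(r)})(\theta)$ with $\phi_\varepsilon$ a standard mollifier and $\varepsilon(r)=(\log\log\tfrac{e^e}{1-r})^{-1}$. Then $G$ is smooth, $\|G\|_\infty\le\|g\|_\infty$, $G$ has nontangential limit $g(\xi)$ at every Lebesgue point (since $1-r=o(\varepsilon(r))$), while $|\partial_\theta G|\lesssim \varepsilon(r)^{-1}$ and $|\partial_r G|\lesssim |\varepsilon'(r)|/\varepsilon(r)=\big((1-r)\log\tfrac{e^e}{1-r}\log\log\tfrac{e^e}{1-r}\big)^{-1}$, and a direct computation shows $|\nabla G|^2(1-|z|^2)\,dA$ is a \emph{vanishing} Carleson measure. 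If your principle were valid, every bounded function would lie in $\VMO$. (For $\BMO$ the principle is vacuously true on bounded data because $L^\infty\subset\BMO$; it is exactly the vanishing version you need that is false.)

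What your argument discards is the \emph{pointwise coupling} between the smallness of $E$ and the location where $f$ oscillates: knowing that the product measure $|\nabla(Eu)|^2(1-|z|^2)\,dA$ is small does not control the boundary oscillation of $Ef$. The paper supplies the coupling differently: it writes $\fint_I|Ef-E(z_I)f(z_I)|\,dm$ as an $E$-oscillation term (killed by $E\in\VMOA$) plus $|E(z_I)|\fint_I|f-f(z_I)|\,dm$, bounds the latter mean oscillation by $\big(\mu(\varepsilon^{-1}Q(I))/|I|+\varepsilon\big)^{1/2}$ via the Poisson kernel, and then, on the arcs where $\mu(\varepsilon^{-1}Q(I))>\varepsilon^2|I|$, invokes Lemma \ref{Lemma 2.4} --- which uses that $E\in\VMOA$ is nearly constant on most of $Q$ relative to $\mu$, so the vanishing of $\int_Q|E|\,d\mu$ forces $|E(z_Q)|\to0$ --- to make the factor $|E(z_I)|$ small. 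Your reduction to real $f$ and your verification that $\log|E|\in\VMO$ implies $E\in\VMOA$ are fine but do not address this; some substitute for Lemma \ref{Lemma 2.4} and the two-case argument is indispensable.
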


We now state our main result. 

\begin{theorem}\label{Theorem 1}

\begin{itemize}
   
    \item[(a)] 
Let $\mu$ be a Carleson measure in ${\mathbb{D}}$. Then, there exists an outer function 
$E \in  \QA$ with $\log|E| \in \VMO$ such that $|E|\mu$ is a vanishing Carleson measure. 
 \item[(b)] 
 Let $\mu$ be a finite positive Borel measure in $\mathbb{D}$. Then, there exists an outer function 
 $E \in  \QA$ with $\log|E| \in \BMO$ such that $|E|\mu$ is a vanishing Carleson measure. 
 \end{itemize}
\end{theorem}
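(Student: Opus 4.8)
The plan is to reduce both statements to the construction of a single real boundary function and then read off $E$ as an exponential. Since we want $|E|\le 1$, write $\log|E| = -\varphi$ with $\varphi\ge 0$, and set $E = \exp(-\varphi - i\widetilde\varphi)$, where $\widetilde\varphi$ denotes the harmonic conjugate; then $|E(z)| = \exp(-P[\varphi](z))$ for $z\in\mathbb D$, with $P[\varphi]$ the harmonic extension. With this normalization it suffices to produce $\varphi\ge 0$ such that (i) $\varphi\in\VMO$ in case (a) (respectively $\varphi\in\BMO$ in case (b)), and (ii) the ratio $\ell(Q)^{-1}\int_Q e^{-P[\varphi]}\,d\mu$ tends to $0$ as $\ell(Q)\to 0$. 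For (a), once $\varphi\in\VMO$ its conjugate $\widetilde\varphi$ is again in $\VMO$, so $F:=-\varphi-i\widetilde\varphi\in\VMOA$; since $\varphi\ge0$ the real part of $F$ is bounded above, $E=e^F$ is a bounded function in $\VMOA$, and hence $E\in\QA$. In case (b) one must instead argue that $E\in\QA$ even though $\varphi$ is only in $\BMO$, which should be possible because the construction places the large values of $\varphi$ exactly where $|E|=e^{-\varphi}$ is small.

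A first observation fixes the size of $\varphi$. If $\varphi$ were bounded then $e^{-P[\varphi]}\ge e^{-\|\varphi\|_\infty}>0$ and $e^{-P[\varphi]}\mu$ could never be vanishing unless $\mu$ already were; hence $\varphi$ must be unbounded, and the correct growth is the slowest one compatible with (i). This suggests building $\varphi$ as a superposition over scales of the local Carleson density of $\mu$, damped by a slowly diverging gauge. Concretely I would set, for $\xi\in\partial\mathbb D$, something of the form
\[
\varphi(\xi) = \int_0^1 g\Big(\frac{\mu(Q(\xi,t))}{t}\Big)\,\frac{dt}{t\,\log(1/t)},
\]
where $Q(\xi,t)$ is the Carleson square of side $t$ at $\xi$ and $g$ is a fixed increasing profile. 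The factor $(t\log(1/t))^{-1}\,dt$ is chosen so that along a direction where the density stays bounded below the integral grows like $\log\log(1/t)$, an iterated logarithm: this is unbounded, which is what makes $e^{-P[\varphi]}\mu$ vanishing, yet it grows slowly enough to lie in $\VMO$. In case (a) the Carleson bound $\mu(Q)\le C\ell(Q)$ keeps the density bounded and the profile $g$ can be taken bounded, so $\varphi\in\VMO$; in case (b) the density is unbounded and one is forced to take $g(s)\sim\log^+ s$, producing single-logarithmic growth and hence only $\varphi\in\BMO$.

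The verification then splits into two independent estimates. For (i) I would use the characterization recalled in the introduction, namely that $\varphi\in\VMO$ (respectively $\BMO$) iff $|\nabla\varphi(z)|^2(1-|z|^2)\,dA(z)$ is a vanishing (respectively bounded) Carleson measure; the bounded overlap of the squares $Q(\xi,t)$ entering the superposition, together with the slow growth of the gauge, should give this Carleson estimate directly. For (ii), fix a small square $Q=Q(\xi_0,s)$ and split its $\mu$-mass according to the local density: on the part of $Q$ lying over arcs where the density exceeds a threshold, the average of $\varphi$ over the corresponding arc is at least of the size of the gauge, so $P[\varphi]$ is large and $e^{-P[\varphi]}$ beats the density, while on the low-density part the measure of $Q$ is itself small relative to $\ell(Q)$; adding the two contributions yields $o(\ell(Q))$.

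The main obstacle is the simultaneous control in (i) and (ii): making $e^{-P[\varphi]}$ small enough on the support of $\mu$ to force the Carleson ratio to vanish requires $P[\varphi]$, hence $\varphi$, to be large at small scales precisely where $\mu$ concentrates, whereas keeping $\varphi$ in $\VMO$ demands that its oscillation die out at those same small scales. The construction lives on the knife-edge between these two requirements, and choosing the gauge and the profile $g$ so that the iterated-logarithmic growth wins in (ii) while the oscillation still vanishes in (i) is the delicate heart of the argument. In case (b) there is the additional difficulty, already flagged above, of showing that $E$ lands in $\QA$ although $\log|E|=-\varphi$ is only in $\BMO$; this will hinge on the fact that the construction confines the non-$\VMO$ behaviour of $\varphi$ to the region where $|E|$ is small, so that $E$ itself still has vanishing mean oscillation.
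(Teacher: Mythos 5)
Your overall philosophy is the right one (the measure has small density on \emph{most} squares, so $|E|$ only needs to be small on \emph{few} of them, and the tension is between making $P[\varphi]$ blow up where $\mu$ concentrates and keeping $\varphi$ in $\VMO$), and your reduction to a nonnegative boundary function $\varphi$ with $E=e^{-\varphi-i\widetilde\varphi}$, including the observation that $e^{-G}\in\QA$ when $G\in\VMOA$ has nonnegative real part, matches the paper. But the central mechanism you propose does not work. Vanishing of $\ell(Q)^{-1}\int_Q e^{-P[\varphi]}d\mu$ requires $P[\varphi]\to\infty$ at \emph{every} location--scale pair where the density $\mu(Q(\xi,t))/t$ stays above a fixed $\varepsilon>0$, including pairs where this happens at a single isolated scale. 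In your superposition $\varphi(\xi)=\int_0^1 g\bigl(\mu(Q(\xi,t))/t\bigr)\,dt/(t\log(1/t))$, one bad scale $t\approx s$ contributes only about $g(\varepsilon)/\log(1/s)\to 0$; the iterated-logarithm growth appears only when the density is bounded below over a range of scales of positive $\log\log$-proportion. A concrete counterexample is the measure of Proposition \ref{Prop:optimality vanishing carleson}: points uniformly spread on very sparse circles $1-|z|=h_k$ give density $\geq 1$ at the single scale $h_k$ near each point and tiny density at all other scales, so your $\varphi$ stays bounded there and $e^{-P[\varphi]}\mu$ is not vanishing. No choice of gauge repairs this: a weight heavy enough to make one isolated scale contribute $+\infty$ destroys the $\BMO$ bound. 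What is missing is the global input that the arcs of the exceptional squares have \emph{summable total length} (the paper's Lemma \ref{Lemma 2.1}, splitting $\mu$ so that the mass in the outer annuli decays like $\varepsilon_n(1-r_n)$), combined with the Garnett--Jones construction (Corollary \ref{Corollary 2.3}) of a $\VMO$ function whose averages over a summable family of arcs tend to infinity. That is the paper's engine, and it has no analogue in your pointwise-in-scale formula.

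For part (b) there is a second gap: you only express the hope that $E\in\QA$ because the non-$\VMO$ behaviour of $\varphi$ is confined to where $|E|$ is small. This is not automatic, and the paper does not prove it for its $E_1E_2$ either; instead it bootstraps, applying part (a) to the Carleson measure $|(E_1E_2)'(z)|^2(1-|z|^2)\,dA(z)$ to produce a corrector $F\in\QA$ with $\log|F|\in\VMO$ and setting $E=F^{1/2}E_1E_2$, which forces $|E'(z)|^2(1-|z|^2)\,dA(z)$ to be a vanishing Carleson measure. You would need this (or some substitute, such as the stopping-time generations with geometrically decaying packing constants that the paper uses to keep $\sum_n h_n$ in $\BMO$) to close the argument.
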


Let $\mu$ be a finite positive Borel measure in $\mathbb{D}$. Roughly speaking, for \textit{most} Carleson squares $Q$ we have $\mu(Q) < C (Q) \ell (Q)$ where $C(Q) \to 0$ as $\ell(Q)\to 0$. 
See Proposition \ref{L:prel}. So for \textit{most} Carleson squares, the contribution from $|E|$ is not needed. Consequently, $|E|$ only needs to be small on \textit{few} Carleson squares, offering the flexibility to construct $E \in \QA$ such that $|E| \mu$ is a vanishing Carleson measure. The proof of Theorem \ref{Theorem 1} uses a decomposition of the measure $\mu$, stopping time arguments  yielding nested families of dyadic arcs on $\partial \mathbb D$ and a construction of certain $\BMO$ functions due to J. Garnett and P. Jones (\cite{Garnett-jones}). 
 
Theorem \ref{Theorem 1} is sharp in several different senses. First, in the conclusion, the vanishing Carleson measure condition can not be replaced by a stronger condition of the same sort. Actually for any increasing function $\omega : [0,1] \to [0, \infty)$ with $\omega(0)=0$, there exists a Carleson measure $\mu$ such that for any outer function $E \in H^\infty$ we have
\[
\limsup_{\ell(Q)\to 0}\frac{\int_{Q}|E(z)|d\mu(z)}{\ell(Q)\omega(\ell(Q))}=\infty . 
\]
See Proposition \ref{Prop:optimality vanishing carleson}. Second, one can not replace $\QA$ by the disk algebra $\A$ of analytic functions in $\mathbb{D}$ which extend continuously to $\overline{\mathbb D}$. Actually a Carleson measure $\mu$ will be constructed for which $|E|\mu$ fails to be a vanishing Carleson measure for any non trivial $E \in \A$. See Proposition \ref{Optimality not E in A}. Finally, in part $b)$ one can not  have the sharper condition $\log|E| \in \VMO$, as it is in part $a)$. See Proposition \ref{Missing optimal}. 

Theorem \ref{Theorem 1} has applications in three different contexts. First, using Theorem \ref{Theorem 1} one can prove Theorem \ref{Wolff theorem} of T. Wolff.
The second application of Theorem \ref{Theorem 1} concerns critical points of functions in Hardy spaces. Let $\BMOA$, respectively $\VMOA$, be the space of functions $F \in \mathbb{H}^2$ whose boundary values $F(\xi)$ with $\xi \in \partial \mathbb{D}$, are in $\BMO$, respectively in $\VMO$. 
W. Cohn proved in \cite[Theorem 1]{Cohn} that given $0< p < \infty$ and $F \in \mathbb{H}^p$, there exists $G \in \BMOA$ such that the zeros (and the multiplicities) of $F'$ and $G'$ coincide. Later D. Kraus in \cite[Theorem 1.1]{Kraus2013} proved that given $0< p<\infty $ and $F \in \mathbb{H}^p$, there exists a Blaschke product $B$ such that the zeros (and the multiplicities) of $F'$ and $B'$ coincide. Using Theorem \ref{Theorem 1} and a beautiful technique developed by D. Kraus in \cite{Kraus2013} we prove the following result. 

\begin{theorem}\label{Theorem 3}
Let $0< p\leq \infty$ and $F \in \mathbb{H}^p$. Then there exists a function $G \in \QA$ such that the zeros (and the multiplicities) of $F'$ and $G'$ coincide.
\end{theorem}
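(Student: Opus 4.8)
The plan is to prescribe the critical divisor of $F$ and then realise it by a bounded analytic function whose boundary oscillation is tamed through Theorem~\ref{Theorem 1}. Write the zeros of $F'$, counted with multiplicity, as a divisor $\{(c_k,n_k)\}$; these are exactly the critical points of $F$. Since the conclusion depends only on $F'$ and its zeros, I would first dispose of the exponent: the case $p=\infty$ follows from any finite $p$ because $\mathbb H^\infty\subset\mathbb H^p$, and for $0<p<\infty$ I would invoke Kraus's theorem \cite{Kraus2013} to produce a Blaschke product $B$ whose critical divisor is again $\{(c_k,n_k)\}$. Replacing $F$ by $B$ costs nothing for the statement but gains a self-map of $\mathbb D$ together with the finite measure
\[
\mu=|B'(z)|^2(1-|z|^2)\,dA(z),
\]
which is finite because $B\in\mathbb H^\infty\subset\mathbb H^2$.

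I would then apply Theorem~\ref{Theorem 1}(b) to $\mu$, obtaining an outer function $E\in\QA$ with $\log|E|\in\BMO$ for which $|E|\mu$ is a vanishing Carleson measure. The guiding heuristic is to look for $G$ with $|G'|^2(1-|z|^2)\,dA=|E|\mu$; concretely, since $E$ is outer and zero-free on the simply connected disc, $\sqrt E=\exp(\tfrac12\log E)$ is a bounded zero-free analytic function, and the choice $G'=\sqrt E\,B'$ gives
\[
|G'(z)|^2(1-|z|^2)\,dA(z)=|E(z)|\,|B'(z)|^2(1-|z|^2)\,dA(z)=|E|\mu,
\]
a vanishing Carleson measure. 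By the characterisation recalled in the introduction this forces $G\in\VMOA$, and since $\sqrt E$ never vanishes, $G'$ and $B'$ have the same zeros with the same multiplicities, so $G$ carries precisely the divisor $\{(c_k,n_k)\}$.

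The heart of the matter, and the step I expect to be the main obstacle, is to guarantee that $G\in\mathbb H^\infty$, since $\QA=\mathbb H^\infty\cap\VMOA$ and a $\VMOA$ function need not be bounded. A direct estimate of the primitive $\int_0^z\sqrt E\,B'$ is unavailable — splitting by Cauchy--Schwarz along radii loses a logarithmically divergent factor — so here I would lean on Kraus's technique. One reads the construction through the Gauss curvature equation $\Delta u=4e^{2u}$: the pull-back $u=\log\bigl(|G'|/(1-|G|^2)\bigr)$ of the hyperbolic metric under a disc self-map solves this equation off its singularities and has the logarithmic singularities $u\sim n_k\log|z-c_k|$ exactly at the prescribed critical points, while conversely a solution carrying these singularities develops into a holomorphic self-map of $\mathbb D$, which is bounded by construction. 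The role of the weight $|E|$ produced by Theorem~\ref{Theorem 1} is to single out, among the conformal metrics bearing the divisor $\{(c_k,n_k)\}$, one whose boundary growth is slow enough that the associated self-map both stays bounded and has $\VMO$ boundary values; the vanishing Carleson property of $|E|\mu$ is precisely the quantitative input that makes the developing map land in $\QA$.

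The delicate point where the two halves must be reconciled is to check that the metric built from $|E|$ has singular set exactly $\{(c_k,n_k)\}$ with no spurious critical points and develops to a single-valued map on $\mathbb D$, so that this map coincides, up to the zero-free factor $\sqrt E$, with the $G$ of the multiplier heuristic and simultaneously inherits boundedness from the curvature picture and membership in $\VMOA$ from the Carleson estimate. Granting this, $G\in\QA$ and the zeros of $G'$, with multiplicities, coincide with those of $B'$, hence with those of $F'$; undoing the reduction to $B$ then yields the theorem for every $F\in\mathbb H^p$ with $0<p\le\infty$.
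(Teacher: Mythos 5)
Your proposal follows essentially the same route as the paper: attach to a $\BMOA$ function with the prescribed critical divisor the Carleson measure $|\cdot'|^2(1-|z|^2)\,dA$, apply Theorem~\ref{Theorem 1} and multiply the derivative by $E^{1/2}$ to produce a $\VMOA$ function $G$ with the right critical points, and then use Kraus's curvature technique to trade $G$ for a bounded function with the same critical set (the paper merely enters via Cohn's factorization $F'=\Phi'R$ instead of via Kraus's Blaschke-product theorem). The ``delicate point'' you flag is resolved exactly as you hope: since $G\in\BMOA$, the equation $\Delta u=4|G'|^2e^{2u}$ admits a bounded solution $u_0$, and Liouville's theorem represents it as $u_0=\log\bigl(|I'|/(|G'|(1-|I|^2))\bigr)$ for a genuine single-valued self-map $I$ of $\mathbb{D}$; boundedness of $u_0$ gives $|G'|\asymp |I'|/(1-|I|^2)$, which excludes spurious critical points and transfers the vanishing Carleson condition from $G$ to $I$, so $I\in\QA$ is the required function.
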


Our last application of Theorem \ref{Theorem 1} concerns generalized Volterra integral operators. Given an analytic function $G$ in $\mathbb D$, the action of the generalized Volterra operator $T_G$ on the analytic function $F$ is defined as
\[
T_G (F) (z)=\int_0^z F(w) G'(w)dw, \quad  z\in \mathbb{D}.
\]
Notice that if $G \in \BMOA$, then $T_G\colon \mathbb{H}^\infty \to \BMOA$ is continuous, see \cite[Proposition 1]{Aleman01101995}. As expected $T_G\colon \mathbb{H}^\infty \to \BMOA$ is compact if and only if $G \in \VMOA$. We will apply Theorem \ref{Theorem 1} to obtain the following result.

\begin{theorem}\label{Theorem 4}
Let $G\in \BMOA$ be non constant. Then
$T_G : \mathbb{H}^\infty \to \BMOA$ is not bounded from below and $T_G (\mathbb{H}^\infty)$ is not closed in $\BMOA$.
\end{theorem}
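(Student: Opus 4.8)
The plan is to show that $T_G$ fails to be bounded below by exhibiting a sequence $F_n \in \mathbb{H}^\infty$ with $\|F_n\|_\infty \approx 1$ but $\|T_G(F_n)\|_{\BMOA} \to 0$. The natural source of such a sequence is Theorem \ref{Theorem 1}: since $G \in \BMOA$, the measure $d\mu(z) = |G'(z)|^2 (1-|z|^2)\, dA(z)$ is a Carleson measure (this is precisely the characterization of $\BMOA$ recalled in the introduction). Applying part (a) of Theorem \ref{Theorem 1} to this $\mu$ produces an outer function $E \in \QA$ with $\log|E| \in \VMO$ such that $|E|\mu$ is a \emph{vanishing} Carleson measure. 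The idea is that multiplying by $E$ forces the Carleson box averages of $|G'|^2(1-|z|^2)$ to decay, which is exactly the condition making an analytic function's derivative-measure vanishing Carleson, i.e. putting the image in $\VMOA$.

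First I would set up the test functions. A single choice will not have norm bounded below while its image has small norm, so instead I would use normalized reproducing-type kernels or, more robustly, I would argue by contradiction: suppose $T_G$ were bounded below, so $\|T_G(F)\|_{\BMOA} \geq c \|F\|_\infty$ for all $F \in \mathbb{H}^\infty$. Take $F = E$, the outer function from Theorem \ref{Theorem 1}(a). Since $E \in \QA \subset \mathbb{H}^\infty$ with $\|E\|_\infty \leq 1$ and $E$ is non-trivial (so $\|E\|_\infty$ is bounded below by a fixed positive constant, or one normalizes $E$ suitably), the lower bound would give $\|T_G(E)\|_{\BMOA} \geq c\|E\|_\infty > 0$. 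On the other hand, the $\BMOA$ semi-norm of $T_G(E)$ is controlled by the Carleson-box behaviour of $|(T_G E)'(z)|^2(1-|z|^2)\,dA(z) = |E(z)|^2 |G'(z)|^2 (1-|z|^2)\, dA(z)$. The key estimate is that this measure is dominated (up to a constant) by $|E|\mu$ on each Carleson square, using $|E| \leq 1$ so that $|E|^2 \leq |E|$; since $|E|\mu$ is a vanishing Carleson measure, the quantity $\sup_{\ell(Q) \leq \delta} \mu_E(Q)/\ell(Q) \to 0$, which forces $T_G(E) \in \VMOA$ and, more than that, lets me show the image is genuinely small in the relevant sense.

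The cleaner route, avoiding the subtlety that a fixed nonzero $E$ gives only $T_G(E) \in \VMOA$ rather than small norm, is to produce a \emph{sequence}. I would take a sequence of points $a_n \to \partial\mathbb{D}$ and apply the localized version of the construction, or rescale: define $F_n$ using conformal automorphisms $\varphi_{a_n}$ so that $F_n = E \circ \varphi_{a_n}$ (or a related normalization) remains in $\mathbb{H}^\infty$ with comparable norm while concentrating near the boundary. Because $|E|\mu$ is vanishing Carleson, the $\BMOA$ seminorm of $T_G(F_n)$, which measures the worst Carleson-box average of the associated measure, tends to zero as the boxes shrink. Thus $\|T_G(F_n)\|_{\BMOA} \to 0$ while $\|F_n\|_\infty$ stays bounded below, contradicting boundedness from below. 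The statement that $T_G(\mathbb{H}^\infty)$ is not closed then follows by a standard functional-analytic fact: a bounded linear operator between Banach spaces has closed range if and only if it is bounded below on the complement of its kernel; since $T_G$ is injective modulo constants (as $G$ is non-constant, $T_G F = 0$ forces $F G' \equiv 0$ hence $F \equiv 0$) and we have just shown it is not bounded below, its range cannot be closed.

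The main obstacle I expect is making the passage from ``$|E|\mu$ is a vanishing Carleson measure'' to ``$\|T_G(F_n)\|_{\BMOA} \to 0$'' fully rigorous, because the $\BMOA$ norm is a supremum over \emph{all} Carleson boxes, including large ones, whereas the vanishing Carleson condition only controls small boxes. This is where the sequence $F_n$ concentrating near the boundary is essential: composing with $\varphi_{a_n}$ (or multiplying by a suitable unimodular-on-the-boundary factor) transfers mass into arbitrarily small boxes, so the large-box contribution to the seminorm of $T_G(F_n)$ can be made negligible while the small-box contribution is controlled by the vanishing Carleson decay. Verifying that this rescaling keeps $\|F_n\|_\infty$ bounded below and interacts correctly with $G'$ will require care, and is the technical heart of the argument.
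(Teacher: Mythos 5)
Your overall architecture coincides with the paper's: apply Theorem \ref{Theorem 1} to the Carleson measure $|G'(z)|^2(1-|z|^2)\,dA(z)$ to get an outer $E\in\QA$ with $F:=T_G(E)\in\VMOA$, then perturb $E$ into a sequence that concentrates near the boundary, and finally reduce ``range not closed'' to ``not bounded below'' via injectivity of $T_G$. All of that is right. The gap is exactly at the point you flag as the technical heart, and your primary proposal for the concentrating sequence would fail. Taking $F_n=E\circ\varphi_{a_n}$ does not work: the smallness of $|E|$ produced by Theorem \ref{Theorem 1} is located precisely on those Carleson boxes where $|G'|^2(1-|z|^2)\,dA$ is heavy, and precomposing with an automorphism relocates the smallness set of $E$ while leaving $G'$ fixed. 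There is then no reason for $|E\circ\varphi_{a_n}|^2|G'(z)|^2(1-|z|^2)\,dA(z)$ to have small Carleson norm, and in general it will not; moreover $T_G(E\circ\varphi_{a_n})$ bears no useful relation to $F$.

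Your parenthetical alternative --- multiplying by a factor unimodular on the boundary --- is the correct one and is what the paper does: take $k_n(z)=E(z)z^n$, so that $\|k_n\|_\infty=\|E\|_\infty$ stays bounded below while $(T_Gk_n)'(z)=z^nF'(z)$ with $F=T_G(E)\in\VMOA$. The estimate you still need to supply is
\begin{equation*}
\sup_{I}\frac{1}{|I|}\int_{Q(I)}|F'(z)|^2|z|^{2n}(1-|z|^2)\,dA(z)\longrightarrow 0\quad\text{as }n\to\infty,
\end{equation*}
which follows by splitting each box: small boxes are handled directly by the $\VMOA$ condition; for a box of fixed size, the portion $\{|z|\leq r\}$ is killed by the factor $r^{2n}$, and the boundary shell $Q(I)\cap\{|z|>r\}$ is covered by boxes of side $1-r$, each contributing at most $\varepsilon(1-r)$ once $1-r$ is small. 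This addresses precisely your worry that the $\BMOA$ seminorm sees all boxes, not just small ones. With this lemma in place your argument closes as written; the closed-range step is fine since $T_G k=0$ forces $kG'\equiv 0$, hence $k\equiv 0$ because $G$ is non-constant.
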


The rest of the paper is organized as follows. Section 2 contains auxiliary results that are used in the proof of Theorem \ref{Theorem 1} which is given in Section 3. In Section 4, Theorem \ref{Theorem 1} is applied to prove Theorems \ref{Wolff theorem}, \ref{Theorem 3} and \ref{Theorem 4}. The sharpness of Theorem \ref{Theorem 1} is discussed in the last section. 

As usual, the notation $A \lesssim B$ means that there exists a universal constant $C>0$ such that $A \leq C B$. 

It is a pleasure to thank O. Ivrii, D. Kraus and O. Roth for several helpful discussions. 

\medskip
\section{Auxiliary Results}
Our first result states that any finite positive Borel measure in $\mathbb{D}$ can be written as the sum of two measures which have small mass on a certain sequence of annuli.


\begin{Lemm}\label{Lemma 2.1}
  Let $\mu$ be a finite positive Borel measure in $\mathbb D$. Let $\{\varepsilon_n\}$ be a decreasing sequence of positive numbers tending to zero. Then there exists an increasing sequence $\{r_n \}$ with $0\leq r_n<1$, $n=0,1,\dots$ and two positive measures $\mu_1$, $\mu_2$ with $\mu=\mu_1+\mu_2$ such that
  \begin{equation}\label{E:propmu1}
       \mu_1 \left\lbrace  z \in \mathbb{D} : |z|>r_{2n+1}\right\rbrace \leq \varepsilon_{2n+1}(1-r_{2n+1}) 
  \end{equation}
  and
  \begin{equation}\label{E:propmu2}
        \mu_2\left\lbrace   z \in \mathbb{D} :  |z|>r_{2n}\right\rbrace\leq \varepsilon_{2n}(1-r_{2n}), 
  \end{equation}
  for $n=0,1,\dots$. Moreover $\{r_n\}$ can be chosen of the form $r_n = 1- 2^{-N(n)}$ for some integer $N(n) \geq 0$, $n=0,1,\ldots$ and $\sum (1-r_n) < \infty$.
\end{Lemm}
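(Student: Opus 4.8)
The plan is to choose the radii $\{r_n\}$ one at a time, pushing each new radius close enough to $\partial\mathbb D$, and to take $\mu_1,\mu_2$ to be the restrictions of $\mu$ to two interlacing families of annuli. The only analytic input is that, since $\mu$ is finite and the sets $\{z\in\mathbb D:|z|>r\}$ decrease to the empty set as $r\to 1$, continuity from above yields
\[
T(r):=\mu\{z\in\mathbb D:|z|>r\}\longrightarrow 0,\qquad r\to 1.
\]
Hence, given any threshold $\delta>0$ and any $r'<1$ already fixed, there is a dyadic radius $r=1-2^{-N}>r'$ with $T(r)<\delta$ and $1-r$ as small as we wish.

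I would then set $\mu_1$ to be $\mu$ restricted to $\{|z|\le r_1\}\cup\bigcup_{n\ge 1}\{r_{2n}<|z|\le r_{2n+1}\}$ and $\mu_2$ to be $\mu$ restricted to the complementary annuli $\bigcup_{n\ge 0}\{r_{2n+1}<|z|\le r_{2n+2}\}$; once $r_n\to 1$ these exhaust $\mathbb D$, so $\mu=\mu_1+\mu_2$. The gain from interlacing is that the part of $\mu_1$ lying strictly beyond $r_{2n+1}$ skips the annulus $\{r_{2n}<|z|\le r_{2n+1}\}$ just assigned to $\mu_1$ and begins only at $\{r_{2n+2}<|z|\le r_{2n+3}\}$, so that
\[
\mu_1\{|z|>r_{2n+1}\}=\sum_{k\ge n+1}\mu\{r_{2k}<|z|\le r_{2k+1}\}\le T(r_{2n+2}),
\]
and symmetrically $\mu_2\{|z|>r_{2n}\}\le T(r_{2n+1})$. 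It therefore suffices to arrange
\[
T(r_{2n+2})\le \varepsilon_{2n+1}(1-r_{2n+1}),\qquad T(r_{2n+1})\le \varepsilon_{2n}(1-r_{2n}),
\]
for then \eqref{E:propmu1} and \eqref{E:propmu2} follow at once.

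The radii are built by induction. Start with $r_0=0$, i.e.\ $N(0)=0$. Having fixed $r_{2n}$, choose $r_{2n+1}=1-2^{-N(2n+1)}>r_{2n}$ so large that $T(r_{2n+1})\le \varepsilon_{2n}(1-r_{2n})$ and $1-r_{2n+1}\le 2^{-(2n+1)}$; having fixed $r_{2n+1}$, choose $r_{2n+2}=1-2^{-N(2n+2)}>r_{2n+1}$ so large that $T(r_{2n+2})\le \varepsilon_{2n+1}(1-r_{2n+1})$ and $1-r_{2n+2}\le 2^{-(2n+2)}$. Both choices are possible by the displayed limit, and $\sum_n(1-r_n)\le 1+\sum_{n\ge1}2^{-n}<\infty$, which also forces $r_n\to 1$ and hence $\mu=\mu_1+\mu_2$. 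The main point, and the only place where some thought is required, is conceptual rather than computational: a single measure cannot work, because $T(r)/(1-r)$ need not tend to $0$ (for instance for a sum of point masses along a radius with geometrically decaying weights), and it is precisely the interlacing that lets us bound the tail of $\mu_i$ beyond a fixed radius $r_m$ by $T$ evaluated at the \emph{next} radius $r_{m+1}$, which—being chosen after $r_m$—can be driven toward $1$ while the quantity $(1-r_m)$ on the right-hand side stays fixed.
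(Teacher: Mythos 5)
Your proof is correct and follows essentially the same route as the paper's: the radii are chosen inductively so that the tail mass beyond $r_{n+1}$ is at most $\varepsilon_n(1-r_n)$, and $\mu_1,\mu_2$ are the restrictions of $\mu$ to the interlacing even and odd annuli, so that the tail of each piece beyond $r_m$ is controlled by the full tail beyond $r_{m+1}$. The only differences are cosmetic (open versus half-open annuli, and your explicit appeal to continuity from above).
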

\begin{proof}
By induction one can define an increasing sequence $\{r_n \}$ with $r_0 =0$ and $0 \leq r_n < 1$, $n=1,2,\ldots$, such that 
\[
\mu\{z\in\ \mathbb{D}\colon \  |z|\geq r_{n+1}\}\leq \varepsilon_n (1-r_n) , \quad n=0,1,2, \ldots .
\]
It is clear that $\{ r_n \}$ can be taken as described in the last part of the statement. Let ${ \mathbbm{1} }_n$ be the indicator function of the annulus $\{z \in \mathbb{D} : r_n \leq |z| < r_{n+1} \}$. Define the two measures  $\mu_1,\mu_2$ as 
\[
\mu_1=\mu \sum_{n=0}^\infty  { \mathbbm{1} }_{2n} \text{ and } \mu_2 = \mu \sum_{n=0}^\infty { \mathbbm{1} }_{2n+ 1}. 
\]
We notice that $\mu=\mu_1+\mu_2$. Moreover for $n=0,1, \dots$, we have that
\[
\mu_1\{|z|\geq r_{2n+1}\}=\mu_1\{|z|\geq r_{2n+2}\}\leq \mu\{|z|\geq r_{2n+2}\}\leq \varepsilon_{2n+1}(1-r_{2n+1})
\]
and
\[
\mu_2\{|z|\geq r_{2n}\}=\mu_2\{|z|\geq r_{2n+1}\}\leq \mu\{|z|\geq r_{2n+1}\}\leq \varepsilon_{2n}(1-r_{2n}).
\]
\end{proof}
The proof of Theorem \ref{Theorem 1} uses a beautiful construction due to P. Jones and J. Garnett (\cite{Garnett-jones}) of certain functions in $\BMO$ supported in a given arc which are large on certain subsets of the arc.  
A Lipschitz function $a\colon \partial \mathbb D \to \mathbb{R}$ is called $B$-adapted to the arc $I\subset \partial \mathbb D$ if the following three conditions hold: the support of $a$ is contained in the dilated arc $3I$; $\sup |a| \leq 1$ and $|\nabla a_j(\xi)|\leq B/|I|$ for any $\xi \in \partial \mathbb{D}$.

\begin{Lemm}\label{Lemma 2.2} {\cite{Garnett-jones}}
Let $\{I_j\}$ be a sequence of arcs in $\partial \mathbb{D}$. Assume that there exists a constant $C_1 >0$ such that for any arc $ I \subseteq \partial \mathbb{D}$ we have 
\begin{equation}\label{E:packing}
    \sum_{I_j\subset I} |I_j|\leq C_1 |I|. 
\end{equation}
Let $a_j$ be a B-adapted function to the arc $I_j$ for $j=1,2, \dots$. Then $\sum_j a_j\in  \BMO$
and 
\[
\bigg\|\sum_j a_j\bigg\|_{\BMO}\lesssim C_1B.
\]
\end{Lemm}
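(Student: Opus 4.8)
The plan is to work directly on the boundary circle, estimating the mean oscillation of $g := \sum_j a_j$ over an arbitrary arc $I$ and taking the supremum at the end. The geometric heart of the matter is to classify the arcs $I_j$ according to their size relative to $I$ and their proximity to $I$: only arcs with $3I_j \cap I \neq \emptyset$ are relevant, since $a_j$ is supported in $3I_j$ and hence vanishes on $I$ otherwise, and among these I separate the \emph{small} arcs with $|I_j| \leq |I|$ from the \emph{large} arcs with $|I_j| > |I|$. Before fixing $I$, I would first record that the series defining $g$ converges in $L^1(\partial \mathbb{D})$: applying the packing hypothesis \eqref{E:packing} to $I = \partial \mathbb{D}$ gives $\sum_j |I_j| \leq C_1$, and since $\sup|a_j| \leq 1$ with $a_j$ supported in $3I_j$ one has $\int |a_j|\,dm \leq 3|I_j|$, so that $\sum_j \|a_j\|_{L^1} \leq 3C_1 < \infty$. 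Thus $g$ is a well-defined element of $L^1$ and there is no convergence issue to worry about.

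For a fixed arc $I$, I would bound $\fint_I |g - g_I|\,dm \leq 2 \fint_I |g - c|\,dm$ for a conveniently chosen constant $c$, writing $g = g_{\mathrm{small}} + g_{\mathrm{large}}$ on $I$ (the arcs with $3I_j \cap I = \emptyset$ drop out entirely). For the small arcs, each such $I_j$ lies inside a fixed dilate of $I$, roughly $5I$, because $3I_j$ meets $I$ while $|I_j| \leq |I|$; the packing hypothesis then gives $\sum_{\mathrm{small}} |I_j| \lesssim C_1 |I|$, and combined with $\int |a_j|\,dm \leq 3|I_j|$ this yields $\fint_I |g_{\mathrm{small}}|\,dm \lesssim C_1$, whence its oscillation over $I$ is $\lesssim C_1$. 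For the large arcs I would instead exploit the Lipschitz bound: grouping them by dyadic scale $2^k |I| \leq |I_j| < 2^{k+1}|I|$ with $k \geq 0$, each such group lives in a dilate of $I$ of length $\approx 2^k |I|$, so packing bounds the number of large arcs at scale $k$ by an absolute multiple of $C_1$; since each one has oscillation over $I$ at most $\|\nabla a_j\|_\infty |I| \leq B\,2^{-k}$, summing the resulting geometric series over $k$ produces a total oscillation $\lesssim C_1 B$. Taking $c = (g_{\mathrm{small}})_I + g_{\mathrm{large}}(\xi_0)$ with $\xi_0$ the center of $I$ then gives $\fint_I |g - g_I|\,dm \lesssim C_1(1 + B)$, and the supremum over $I$ finishes the argument.

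The main obstacle is the large-arc estimate, where a naive count of arcs meeting $I$ diverges: there are $\sim C_1$ arcs at each scale $k$, and summed over all scales this is infinite. The resolution is to pair the scale-by-scale count coming from packing with the geometric decay $2^{-k}$ of the individual oscillations coming from the gradient bound; it is exactly this pairing that produces a convergent series and the factor $B$. Finally, to upgrade the stated estimate to the clean bound $\lesssim C_1 B$ for every $B > 0$ (rather than $\lesssim C_1(1+B)$), I would observe that a $B$-adapted function automatically satisfies $\sup |a_j| \lesssim \min(1, B)$: since $a_j$ vanishes off $3I_j$ and every point of its support is within distance $\tfrac32 |I_j|$ of the vanishing set, the gradient bound forces $|a_j| \leq \tfrac32 B$. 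Feeding $\sup|a_j| \lesssim \min(1,B)$ into the small-arc estimate improves its contribution from $\lesssim C_1$ to $\lesssim C_1 B$, matching the large-arc bound.
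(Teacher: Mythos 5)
Your argument is correct: the reduction to arcs with $3I_j\cap I\neq\emptyset$, the split into $|I_j|\leq|I|$ (controlled by the packing condition through $L^1$ averages) versus $|I_j|>|I|$ (controlled by pairing the $O(C_1)$-per-dyadic-scale count with the $B\,2^{-k}$ Lipschitz oscillation), and the observation that $\sup|a_j|\lesssim\min(1,B)$ all check out. The paper itself gives no proof of this lemma, deferring to Lemma 2.1 of \cite{Garnett-jones}, and your argument is essentially that standard one, so there is nothing further to compare.
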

For the proof of Lemma \ref{Lemma 2.2}, we refer to Lemma 2.1 in \cite{Garnett-jones} (see Lemma 3.2 of \cite{NICOLAU200021} for a $\VMO$ version).
The following result may be known but, since it is not clearly stated in the literature, we provide a short proof based on an idea in Lemma 1.2 in \cite{Wolff}. 

\begin{corollary}\label{Corollary 2.3}
Let $\{I_j\}$ be a sequence of arcs in $\partial \mathbb D$ with $\sum_{j} |I_j|<\infty$. Then, there exists a positive function $f \in \VMO$ such that 
\[
\lim_{j \to \infty} \fint_{I_j}f  dm  =+\infty.
\]
\end{corollary}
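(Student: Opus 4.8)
The plan is to build $f$ as a superposition of capped logarithmic bumps, one per arc, whose heights tend to infinity but slowly enough that the total stays integrable and lands in $\VMO$. First I would fix a convenient growth rate: since $\sum_j |I_j|<\infty$, there is a sequence $\lambda_j\to\infty$ with $\lambda_j\ge 2$ and $\sum_j \lambda_j|I_j|<\infty$ (a standard consequence of convergence of a positive series). Writing $\xi_j$ for the centre of $I_j$, the building block I would use is
\[
\varphi_j(\xi)=\min\Big\{\Big(\log\frac{\lambda_j|I_j|}{|\xi-\xi_j|}\Big)^{+},\ \log\lambda_j\Big\}.
\]
This function equals $\log\lambda_j$ on $I_j$, is supported on the arc of length $2\lambda_j|I_j|$ centred at $\xi_j$, and rises from $0$ to its maximal value $\log\lambda_j$ gradually, gaining only $O(1)$ over each dyadic scale between $|I_j|$ and $\lambda_j|I_j|$. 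Consequently $\fint_{I_j}\varphi_j=\log\lambda_j\to\infty$, each $\varphi_j$ is a continuous function with $\|\varphi_j\|_{\BMO}=O(1)$, and $\int_{\partial\mathbb D}\varphi_j\lesssim \lambda_j|I_j|$, so that $\sum_j\int\varphi_j\lesssim\sum_j\lambda_j|I_j|<\infty$.

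Next I would set $f=\sum_j\varphi_j$ (or, as explained below, a suitable supremum). Then $f\ge 0$, $f\in L^1(\partial\mathbb D)$, and $\fint_{I_j}f\ge\fint_{I_j}\varphi_j=\log\lambda_j\to\infty$, which is the desired conclusion, so everything reduces to checking $f\in\VMO$. For this I would decompose each $\varphi_j$ across dyadic scales, writing it (up to a bounded factor) as a sum of $O(1)$-adapted bumps attached to the dilated arcs $2^m I_j$, $1\le 2^m\le\lambda_j$, and invoke the $\VMO$ refinement of the Garnett--Jones construction (Lemma \ref{Lemma 2.2} together with the $\VMO$ version in \cite{NICOLAU200021}). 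This reduces the entire matter to verifying that the family $\{2^m I_j\}$ satisfies the packing condition \eqref{E:packing} with a uniform constant, together with the vanishing refinement that places the sum in $\VMO$ rather than merely $\BMO$.

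The main obstacle is exactly this packing estimate. The hypothesis $\sum_j|I_j|<\infty$ only yields packing at the scale of the whole circle; it does not prevent the arcs from clustering in a tiny region, where the naive sum of bumps accumulates unbounded multiplicity and \eqref{E:packing} fails. I would handle this by not summing blindly: replacing $\sum_j\varphi_j$ by the supremum $f=\sup_j\varphi_j$ (equivalently, selecting a subfamily of bounded overlap through a stopping-time or Besicovitch-type argument) caps the local multiplicity by an absolute constant while preserving $\varphi_j\le f$, hence preserving $\fint_{I_j}f\ge\log\lambda_j$. With bounded overlap the associated dyadic family obeys \eqref{E:packing} with an absolute constant, and since each $\varphi_j$ builds its height gradually across scales with support shrinking to $0$, the resulting function lies in $\VMO$. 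Checking that the supremum (or the selected sum) genuinely inherits the $\VMO$ estimate of Lemma \ref{Lemma 2.2}, uniformly across the clustering scales, is the delicate point I expect to require the most care.
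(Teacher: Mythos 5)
There is a genuine gap, and it is not the packing condition you worry about but the $\VMO$ membership itself: your building blocks are uniformly in $\BMO$ but not uniformly in $\VMO$, and no pointwise supremum or bounded-overlap selection can repair this. Concretely, on the arc $\tilde I_j$ of length $2\lambda_j|I_j|$ centered at $\xi_j$, the function $\varphi_j$ is (apart from the plateau, which occupies a fraction $1/\lambda_j$ of $\tilde I_j$) the pure logarithm $\log(\lambda_j|I_j|/|\xi-\xi_j|)$, whose mean oscillation over $\tilde I_j$ is an absolute constant (essentially $\fint_0^1|\log(1/s)-1|\,ds=2/e$, independent of $j$); this is exactly the scale invariance that makes $\log(1/|x|)$ a $\BMO$ function which is not in $\VMO$. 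Your bump gains a fixed amount $\log 2$ per dyadic scale for every $j$, whereas $\VMO$ requires the gain per scale to tend to $0$. Now take a perfectly admissible input, e.g.\ arcs whose dilates $3\lambda_jI_j$ are pairwise disjoint (centers spread out): then $f=\sup_j\varphi_j=\sum_j\varphi_j$ coincides with $\varphi_j$ on $\tilde I_j$, so $\fint_{\tilde I_j}|f-f_{\tilde I_j}|\,dm\geq c>0$ while $|\tilde I_j|=2\lambda_j|I_j|\to0$; hence $f\notin\VMO$. The truncated-log profile is simply the wrong building block. (Two secondary points: you cannot ``select a subfamily'', since the conclusion must hold for \emph{every} $I_j$; and Lemma \ref{Lemma 2.2} controls sums over packed families, not countable suprema, so the supremum does not inherit its estimate.)

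The paper's proof avoids this by decoupling the height of the bump from the length of the individual arc. Split $\{I_j\}=\bigcup_n\mathcal A_n$ into finite packets with $\sum_{I\in\mathcal A_n}|I|\lesssim e^{-n^3}$ (possible since the series converges), and invoke the Garnett--Jones set lemma: for a set $E$ which is a finite union of arcs there is a smooth positive $f_n$ with $\|f_n\|_{\BMO}\lesssim1$ and $f_n\gtrsim-\log m(E)\gtrsim n^3$ on $E$. The function $f=\sum_nf_n/n^2$ then satisfies $\fint_{I_j}f\,dm\gtrsim n$ for $I_j\in\mathcal A_n$, while $\|f_n/n^2\|_{\BMO}\lesssim n^{-2}$ is summable; since each partial sum is continuous and $\VMO$ is the $\BMO$-closure of the continuous functions, $f\in\VMO$. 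The mechanism you are missing is the slack between the exponent $n^3$ (the height achievable at $\BMO$-norm $O(1)$, governed by the measure of the whole packet rather than by a single arc) and the divisor $n^2$ (which makes the $\BMO$ norms summable without destroying the divergence of the averages). Any repair of your argument would in effect have to reproduce this grouping-and-renormalization step.
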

\begin{proof}
We use Lemma 2.2 of \cite{Garnett-jones}, which says that given a measurable set $E\subset \partial \mathbb D$ there exists a positive function $h$ with $\|h \|_{\BMO} \leq 1$ such that 
\[
-\log m (E) \lesssim h (\xi) , \quad \text{ for all }\xi \in E.
\]
Moreover, if $E$ is a finite union of arcs, then $h$ may be taken in $C^\infty(\partial \mathbb D)$.
Since $\sum_j |I_j|<\infty$, the collection $\{I_j\}$ can be split as $\{I_j \} = \cup_{n \geq 1} \mathcal{A}_n$ where $\mathcal{A}_n$ is a collection of finitely many arcs which satisfies  
\[
\sum_{I \in \mathcal{A}_n } |I| \lesssim e^{-n^3} , \quad n=1,2, \ldots
\]
Let $f_n$ be a positive smooth function in $\partial \mathbb{D}$ satisfying $\|f_n\|_{\BMO}\lesssim 1$ and 
\[
f_n (\xi) \geq n^3, \quad \text{ for all }\xi \in \bigcup_{I \in \mathcal{A}_n} I .
\]
We set $f=\sum_n f_n/n^2$. Then $ f \in \VMO$  and  
\[
\lim_{j \to \infty} \frac{1}{|I_j|}\int_{I_j}f  dm =\infty.
\]
\end{proof}    

Let $\mathcal{D}$ denote the family of dyadic arcs of the unit circle. The corresponding family of dyadic Carleson squares is defined as $\{Q(I) : I \in \mathcal{D} \}$. Note that two dyadic Carleson squares are either disjoint or one is contained into the other. Given a Carleson square $Q=Q(I)$ where $I \subset \partial \mathbb{D}$ is an arc centered at the point $\xi \in \partial \mathbb{D}$, consider $z_Q = (1- \ell(Q)) \xi$. The next preliminary result will be needed in the proof of Theorem \ref{Wolff theorem}.
\begin{Lemm}\label{Lemma 2.4}
Let $\mu$ be a Carleson measure in $\mathbb D$. Given $\varepsilon>0$ consider the collection $\mathcal{A}=\mathcal{A}(\varepsilon)$ of Carleson squares $Q$ such that $\mu(Q)\geq \varepsilon \ell(Q)$. 
Let $E \in \VMOA$ such that $|E|\mu$ is a vanishing Carleson measure. Then 
\[
\lim_{Q \in \mathcal{A},\, \ell(Q)\to 0}|E(z_Q)|=0.
\]
\end{Lemm}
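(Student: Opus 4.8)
The plan is to argue by contradiction and reduce the statement to a violation of the hypothesis that $|E|\mu$ is a vanishing Carleson measure. Suppose the conclusion fails: there exist $\delta>0$ and a sequence of Carleson squares $Q_k\in\mathcal A$ with $\ell(Q_k)\to 0$ and $|E(z_{Q_k})|\geq\delta$. The goal will be to produce, for arbitrarily small scales, a Carleson square $Q$ on which $|E|$ stays bounded below by a fixed multiple of $\delta$ on a subset carrying $\mu$-mass comparable to $\ell(Q)$; this forces $\ell(Q)^{-1}\int_Q|E|\,d\mu\gtrsim\delta$, contradicting $\ell(Q)^{-1}\int_Q|E|\,d\mu\to 0$.

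The first ingredient is that $E\in\VMOA$ lies in the little Bloch space, so $|E'(z)|(1-|z|^2)\to 0$ as $|z|\to 1$. Writing $T_Q=\{z\in Q : 1-|z|\geq \ell(Q)/2\}$ for the top part of $Q$, every point of $T_Q$ is within bounded hyperbolic distance of $z_Q$, and integrating $E'$ along a geodesic yields $|E(z)-E(z_Q)|\lesssim \sup_{1-|w|\leq \ell(Q)}|E'(w)|(1-|w|^2)=:\tilde\eta(\ell(Q))$ for $z\in T_Q$, where $\tilde\eta(\ell(Q))\to 0$ as $\ell(Q)\to 0$. Hence, once $\ell(Q)$ is small enough that $\tilde\eta(\ell(Q))\leq\delta/2$, the bound $|E(z_Q)|\geq\delta$ propagates to $|E(z)|\geq\delta/2$ for all $z\in T_Q$. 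Consequently, if $\mu(T_Q)\geq c\,\ell(Q)$ for some fixed $c>0$, then $\int_Q|E|\,d\mu\geq \tfrac{\delta}{2}\mu(T_Q)\geq \tfrac{c\delta}{2}\ell(Q)$, which is exactly the desired contradiction since $\ell(Q)=\ell(Q_k)\to 0$.

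It remains to locate such a square with substantial mass in its top part. If the top part of $Q_k$ itself carries mass $\geq \tfrac{\varepsilon}{4}\ell(Q_k)$ we are done. Otherwise the bulk of $\mu(Q_k)\geq\varepsilon\,\ell(Q_k)$ sits in the two dyadic children of $Q_k$, and we pass to the heavier child, iterating a stopping-time descent $Q_k=Q^{(0)}\supset Q^{(1)}\supset\cdots$ that always follows the child carrying the larger share of the deep mass. By construction each descent square has $\ell(Q^{(j)})=2^{-j}\ell(Q_k)\to 0$, so the vanishing Carleson condition applies to all of them; the plan is to show the descent must reach a square whose top part carries mass $\gtrsim\ell(Q^{(j)})$, at which point the previous paragraph applies.

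The main obstacle is controlling the value $|E(z_{Q^{(j)})}|$ along this descent, so that the lower bound $|E|\geq\delta/2$ on the successive top parts survives. Going from one generation to the next costs a drift of size $\tilde\eta(\ell(Q^{(j)}))$, and these \emph{per-generation} increments tend to zero; however their sum over a deep chain need not converge, since a $\VMOA$ function may exhibit unbounded radial drift. The delicate point, and the place where the hypothesis on $|E|\mu$ must genuinely be used rather than merely the geometry of $\mu$, is that deep mass cannot simultaneously sustain large values of $|E|$ while keeping $\ell(Q)^{-1}\int_Q|E|\,d\mu$ small. I would therefore close the argument by testing the vanishing Carleson condition of $|E|\mu$ on the descent squares themselves: as long as $|E(z_{Q^{(j)})}|\geq\delta/2$ persists, the small-top-part (Case~B) alternative forces $\mu(T_{Q^{(j)}})=o(\ell(Q^{(j)}))$ at every stage, and tracking the total mass $\sum_j\mu(T_{Q^{(j)}})$ against $\int_{Q_k}|E|\,d\mu$ shows that the descent cannot continue indefinitely with $|E|$ bounded below; the first generation at which either the top mass becomes substantial or the persistence of $|E(z_{Q^{(j)})}|\geq\delta/2$ fails yields the contradiction. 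Making the interplay between the accumulated little-Bloch drift and the vanishing-Carleson budget quantitative, so that the descent terminates at a controlled depth, is the technical heart of the proof.
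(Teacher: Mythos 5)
Your reduction is the right one—show that $|E|\geq |E(z_Q)|-\eta$ on a subset of $Q$ carrying a definite fraction of $\mu(Q)\geq\varepsilon\ell(Q)$, and contradict the vanishing Carleson property of $|E|\mu$—but the mechanism you propose for locating that subset does not close, and you say so yourself: the "technical heart" you defer is precisely the step that is missing. The little Bloch condition $|E'(z)|(1-|z|^2)\to 0$ only controls the drift of $E$ across one dyadic generation, and as you note these increments need not be summable along a deep descent chain. Your attempt to rescue this by "tracking the total mass $\sum_j\mu(T_{Q^{(j)}})$ against $\int_{Q_k}|E|\,d\mu$" cannot produce a contradiction: the top parts along a nested chain are pairwise disjoint, so their masses sum to at most $\mu(Q_k)$ no matter how long the descent runs, and nothing forces the descent to terminate at a depth where the accumulated drift is still $\leq\delta/2$. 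A telling symptom is that your argument never uses the hypothesis that $\mu$ itself is a Carleson measure (only the lower bound $\mu(Q_k)\geq\varepsilon\ell(Q_k)$ and the vanishing condition on $|E|\mu$), whereas that upper bound is essential to prevent $\mu$ from hiding all of its mass exactly where $E$ has drifted away from $E(z_Q)$.

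The missing idea is to exploit $E\in\VMOA$ through a stopping-time (John--Nirenberg type) estimate rather than through the little Bloch condition. For fixed $\eta>0$, let $\{Q_j\}$ be the maximal dyadic subsquares of $Q$ with $\sup\{|E(w)-E(z_Q)|:w\in T(Q_j)\}\geq\eta$; the $\VMOA$ hypothesis gives $\sum_j\ell(Q_j)=o(\ell(Q))$ as $\ell(Q)\to 0$. Since $\mu$ is Carleson, $\mu(\cup_jQ_j)\lesssim\sum_j\ell(Q_j)=o(\ell(Q))=o(\mu(Q)/\varepsilon)$, so the complementary region $R=Q\setminus\cup_jQ_j$, on which $|E|\geq|E(z_Q)|-\eta$, satisfies $\mu(R)\geq\tfrac12\mu(Q)\geq\tfrac12\varepsilon\ell(Q)$ for $\ell(Q)$ small. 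Then $\int_Q|E|\,d\mu\geq\tfrac12(|E(z_Q)|-\eta)\varepsilon\ell(Q)$, and the vanishing Carleson property of $|E|\mu$ forces $\limsup|E(z_Q)|\leq\eta$ with $\eta$ arbitrary. This is a "horizontal" argument at the single scale of $Q$ and avoids your vertical descent entirely. As written, your proposal is a correct strategy with an unfilled hole exactly where the theorem's content lies.
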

\begin{proof}
Given a Carleson square $Q$ denote $I(Q)= \overline{Q} \cap \partial \mathbb{D}$. Fixed a constant $\eta >0$ and a Carleson square $Q \in \mathcal{A} (\varepsilon)$, consider the family $\{Q_j \}$ of maximal dyadic Carleson squares contained in $Q$ such that
\[
\sup \{|E(w) - E(z_Q)| : w \in T(Q_j)\} \geq \eta . 
\]
Here $T(Q_j) = \{z \in Q_j : 1-|z| \geq \ell (Q_j) / 2\}$ is the top part of $Q_j$. Since $E\in \VMOA$, we have
\begin{equation}
    \label{vmo}
    \frac{1}{\ell(Q)} \sum \ell (Q_j) \to 0 \text{    as    } \ell (Q) \to 0. 
\end{equation}
Consider the region $R = R(Q, \eta) = Q \setminus \cup Q_j$. Since $\mu $ is a Carleson measure and $Q \in \mathcal{A} (\varepsilon)$, from \eqref{vmo} we deduce that 
\begin{equation}
    \label{vmo1}
    \frac{ \mu (R)}{\mu(Q)}  \to 1 \text{    as    } \ell (Q) \to 0. 
\end{equation}
Moreover, by construction we have that
\begin{equation*}
    \sup \{|E(w) - E(z_Q)| : w \in R\} \leq \eta . 
\end{equation*}
Hence 
$$
\int_Q |E| d \mu \geq \int_R |E| d \mu \geq \frac{1}{2} (|E(z_Q)| - \eta ) \varepsilon \ell (Q) , 
$$
if $\ell (Q)$ is sufficiently small. Since $|E|\mu$ is a vanishing Carleson measure and $\eta >0$ can be taken arbitrarily small, we deduce that $|E(z_Q)| \to 0$ as $\ell (Q)$ tends to $0$. 
\end{proof}

Next result will be used in the proof of Theorem \ref{Theorem 3}. It can be understood as a hyperbolic analogue of the classical fact that a harmonic function $u$ in $\mathbb{D}$ such that $|\nabla u (z)|^2 (1-|z|^2) dA(z)$ is a Carleson measure, has boundary values in BMO. 

\begin{Lemm}
     \label{nou}
     Let $F$ be an analytic self-mapping of the unit disc. 

     (a) Assume that
     \begin{equation}
         \label{hipotesis}
         \frac{|F' (z)|^2 (1-|z|^2)}{(1 - |F(z)|^2)^2}  dA(z)
     \end{equation}
 is a Carleson measure. Then $\log (1- |F|^2) \in \BMO $. 
     
     (b) Assume that
     \begin{equation}
         \label{hipotesis2}
         \frac{|F' (z)|^2 (1-|z|^2)}{(1 - |F(z)|^2)^2}  dA(z)
     \end{equation}
 is a vanishing Carleson measure. Then $\log (1- |F|^2) \in \VMO$ and consequently, 
 $$
 \sup_{0<r<1} \int_{\partial \mathbb{D}} \frac{dm (\xi)}{(1-|F(r \xi)|)^s} < \infty,
 $$
 for any $0<s<\infty$.
 \end{Lemm}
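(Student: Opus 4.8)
The plan is to set $u=\log(1-|F|^2)$ and reduce the statement to the classical fact recalled in the introduction, that a \emph{harmonic} function whose gradient generates a (vanishing) Carleson measure has boundary values in $\BMO$ (resp.\ $\VMO$). The starting point is a direct differential computation. Writing $g=1-|F|^2$ and using that $F$ is holomorphic, so that $\partial_z g=-F'\overline F$, one gets
\[
|\nabla u|^2=4|\partial_z u|^2=\frac{4|F|^2|F'|^2}{(1-|F|^2)^2}\le\frac{4|F'|^2}{(1-|F|^2)^2},
\]
so $|\nabla u(z)|^2(1-|z|^2)\,dA(z)$ is dominated by four times the measure in \eqref{hipotesis}. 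A second computation, using the algebraic identity $g+|F|^2=1$, gives
\[
\Delta u=4\,\partial_z\partial_{\bar z}u=-\frac{4|F'|^2}{(1-|F|^2)^2}\le 0,
\]
so that $u$ is superharmonic with Riesz mass $d\nu=-\tfrac{1}{2\pi}\Delta u\,dA$, and $(1-|w|^2)\,d\nu(w)$ equals a fixed multiple of the measure in \eqref{hipotesis}. Thus, under the hypothesis of (a) both $|\nabla u|^2(1-|z|^2)\,dA$ and $(1-|w|^2)\,d\nu$ are Carleson measures, and under the hypothesis of (b) they are both vanishing Carleson measures.

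Next I would use the Riesz representation of $u$. Let $q(z)=\int_{\mathbb D}G(z,w)\,d\nu(w)\ge 0$ be the Green potential of $\nu$, with $G(z,w)=\log|(1-\overline w z)/(z-w)|$. Since $(1-|w|^2)\,d\nu$ is finite, $q$ is finite and has vanishing boundary values almost everywhere; moreover $\Delta q=\Delta u$, so that $h:=u-q$ is harmonic and shares the boundary values $u^*$ of $u$. It therefore suffices to prove that $h\in\BMO$ (resp.\ $\VMO$), which by the classical characterization amounts to showing that $|\nabla h|^2(1-|z|^2)\,dA$ is a (vanishing) Carleson measure; the characterization simultaneously gives that $h$ is the Poisson extension of its boundary values, so that $u^*=h^*\in\BMO$ (resp.\ $\VMO$). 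Since $\nabla h=\nabla u-\nabla q$ and $|\nabla u|^2(1-|z|^2)\,dA$ is already controlled, the whole matter reduces to the Green potential $q$.

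The heart of the argument, and the step I expect to be the main obstacle, is therefore the potential estimate: if $(1-|w|^2)\,d\nu$ is a (vanishing) Carleson measure then so is $|\nabla q|^2(1-|z|^2)\,dA$. For this I would differentiate under the integral sign and use the exact identity $|\nabla_z G(z,w)|=(1-|w|^2)/(|1-\overline w z|\,|z-w|)$, reducing the claim to the Carleson bound
\[
\frac{1}{\ell(Q)}\int_Q\Big(\int_{\mathbb D}\frac{(1-|w|^2)\,d\nu(w)}{|1-\overline w z|\,|z-w|}\Big)^2(1-|z|^2)\,dA(z)\lesssim\sup_{Q'}\frac{1}{\ell(Q')}\int_{Q'}(1-|w|^2)\,d\nu(w),
\]
which I would obtain by a Schur-type splitting of the inner integral according to the dyadic distance of $w$ from $Q$, exactly the mechanism behind the classical characterization. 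The vanishing version follows by the same localization, letting $\ell(Q)\to 0$ and using that the right-hand side tends to zero on small squares. This completes parts (a) and (b).

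Finally, for the consequence in (b), set $v^*=-u^*=\log\frac{1}{1-|F^*|^2}\ge 0$, which lies in $\VMO$. Writing a $\VMO$ function as a bounded function plus a function of arbitrarily small $\BMO$ seminorm and applying the John--Nirenberg inequality, one gets $e^{sv^*}=(1-|F^*|^2)^{-s}\in L^1(\partial\mathbb D)$ for \emph{every} $s>0$. Now the superharmonicity of $u$ yields $u\ge P[u^*]$ pointwise, hence $(1-|F(z)|^2)^{-s}\le|\Phi(z)|^2$, where $\Phi$ is the outer function with boundary modulus $(1-|F^*|^2)^{-s/2}$; this $\Phi$ lies in $\mathbb H^2$ precisely because $(1-|F^*|^2)^{-s}\in L^1$. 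Integrating over $|z|=r$ and using $\int_{\partial\mathbb D}|\Phi(r\xi)|^2\,dm(\xi)\le\|\Phi\|_2^2$ together with $(1-|F|)^{-s}\le 2^s(1-|F|^2)^{-s}$ gives the stated uniform bound. The decisive point is that superharmonicity provides the inequality $u\ge P[u^*]$ in the correct direction, so the interior values are controlled by an honest $\mathbb H^2$ function rather than by the possibly large Green potential $q$.
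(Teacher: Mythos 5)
Your differential identities for $u=\log(1-|F|^2)$ are correct (they are, up to normalizing constants, the identities \eqref{laplacian} and \eqref{gradient} that the paper's proof also starts from), but the step you yourself single out as the heart of the argument is a genuine gap: the implication ``$(1-|w|^2)\,d\nu$ is a Carleson measure $\Rightarrow |\nabla q|^2(1-|z|^2)\,dA$ is a Carleson measure'' for the Green potential $q=G\nu$ is \emph{false}, so no Schur-type splitting can prove it. Fix $w_0$ with $\delta=1-|w_0|$ small and let $\nu_\varepsilon$ be the uniform measure of total mass $1/2$ on the disc $D(w_0,\varepsilon\delta)$; then $(1-|w|^2)\,d\nu_\varepsilon$ has Carleson norm comparable to $1$ uniformly in $\varepsilon$, while for $\varepsilon\delta\ll |z-w_0|=r\ll\delta$ the vectors $\nabla_zG(z,w)$, $w\in D(w_0,\varepsilon\delta)$, all point in essentially the same direction, so $|\nabla q(z)|\simeq 1/r$ and
\[
\frac{1}{\delta}\int_{Q}|\nabla q(z)|^2(1-|z|^2)\,dA(z)\gtrsim\int_{\varepsilon\delta}^{\delta/2}\frac{dr}{r}=\log\frac{1}{2\varepsilon}\longrightarrow\infty .
\]
The kernel $\bigl(|1-\overline{w}z|\,|z-w|\bigr)^{-1}$ is strictly more singular on the diagonal than the kernel $|1-\overline{w}z|^{-2}$ behind the classical balayage-to-$\BMO$ estimate, and the Carleson condition does not prevent $\nu$ from concentrating at scales much finer than $1-|w_0|$; this logarithmic loss is exactly what your reduction cannot absorb (the homogeneity mismatch in your displayed inequality, quadratic in $\nu$ on the left and linear on the right, is a symptom of the same problem). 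Since $\nabla h=\nabla u-\nabla q$ and the conclusion is essentially equivalent to the Carleson property of $|\nabla h|^2(1-|z|^2)\,dA$, any argument must exploit the interplay between $\Delta u$ and $u$ itself rather than the Carleson norm of $(1-|w|^2)\,d\nu$ alone. That is what the paper does: a stopping time on dyadic Carleson squares and Green's formula applied to $(u-u(z_Q))^2$ on the region where $|u-u(z_Q)|\le C$, so that the dangerous term $(u-u(z_Q))\Delta u$ is bounded by $C\,\Delta u$ and hence by the Carleson hypothesis; this yields the packing condition \eqref{iifinal} and exponential decay of $m\{\xi\in I:|u(\xi)-u(z_I)|>\lambda\}$, which is the $\BMO$ (resp.\ $\VMO$) estimate. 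The localization to the set where $u$ has oscillated by at most $C$ is precisely the device that avoids the logarithm your global potential estimate loses.

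There is a secondary gap at the end: ``superharmonicity provides $u\ge P[u^*]$'' is not a valid principle. The function $u(z)=-\dfrac{1-|z|^2}{|1-\overline{\xi_0}z|^2}$ is harmonic (hence superharmonic) and negative with $u^*=0$ almost everywhere, yet $u<P[u^*]$. Superharmonicity only gives $u\ge h$ with $h$ the greatest harmonic minorant, whose boundary measure may a priori carry a negative singular part; that part would contribute a factor of the form $|S|^{-2s}$, $S$ singular inner, to $(1-|F|^2)^{-s}$ and destroy the uniform bound over $0<r<1$. You would need to rule out this singular part before passing from the boundary integrability of $(1-|F^*|^2)^{-s}$ (which your John--Nirenberg argument does give) to the stated supremum over $0<r<1$.
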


 \begin{proof}
(a) Let $u(z) = -\log (1-|F(z)|^2)$, $z \in \mathbb{D}$. By Schwarz's Lemma there exists a constant $C_1 >0$ such that
\begin{equation}
    \label{Schwarz}
    \sup \{|u(z) - u(w)| : z \in T(Q), w \in T(Q_1) \} \leq C_1 ,
\end{equation}
for any pair of Carleson squares $Q_1 \subset Q$ with $\ell (Q_1) = \ell (Q) / 2$. Let $K$ be the Carleson norm of the Carleson measure in \eqref{hipotesis}. Let $C> 2 C_1 + 2K$ be a large constant to be determined later. 

Let $I$ be an arc of the unit circle and consider the dyadic decomposition of $Q(I)$. We now use a stopping time argument. Let $\mathcal{G}_1$ be the collection of maximal dyadic Carleson squares $Q_j^{(1)} \subset Q(I)$ such that
\begin{equation}
    \label{generacio1}
    \sup \{|u(z) - u(z_{Q(I)})| : z \in T(Q_j^{(1)})\} \geq C. 
\end{equation}
The maximality and \eqref{Schwarz} give that 
$$
C- C_1 \leq |u(z_{Q_j^{(1)}}) - u(z_{Q(I)})| \leq C + C_1 . 
$$
We continue by induction. More concretely, assume that the collection $\mathcal{G}_{n-1} = \{Q_l^{(n-1)} : l =1,2,\ldots \}$ has been defined. For each $Q_l^{(n-1)} \in \mathcal{G}_{n-1}$ consider the collection $\mathcal{G}_{n} (Q_l^{(n-1)})$ of maximal dyadic Carleson squares $Q_j^{(n)} \subset Q_l^{(n-1)}$ such that
\begin{equation}
    \label{genn}
\sup \{|u(z) - u(z_{Q_l^{(n-1)}})| : z \in T(Q_j^{(n)}) \} \geq C. 
\end{equation}
 The collection $\mathcal{G}_n$ is defined as 
$$
\mathcal{G}_n = \cup_l \, \mathcal{G}_{n} (Q_l^{(n-1)}). 
$$
 As before the maximality and \eqref{Schwarz} give that 
     $$
   C- C_1 \leq   |u(z_{Q_j^{(n)}}) - u(z_{Q_j^{(n-1)}})| \leq C + C_1 . 
     $$
Observe that 
\begin{equation}
    \label{boundoutside}
    |u(\xi)- u(z_I)|\leq (C+ C_1)n, \quad \xi \in   I \setminus \cup_j \overline{Q_j^{(n)}}, \quad n=1,2,\ldots.    
\end{equation}

Let $Q \in \mathcal{G}_{n-1}$. Consider the region
$
\Omega = Q \setminus \cup Q_j^{(n)},
$
where the union is taken over all Carleson squares $Q_j^{(n)} \in \mathcal{G}_{n} (Q) $. Note that 
\begin{equation}
    \label{laplacian}
    \Delta u (z)=  \frac{|F' (z)|^2 }{(1 - |F(z)|^2)^2}, \quad z \in \mathbb{D}
\end{equation}
and 
\begin{equation}
    \label{gradient}
    |\nabla u (z)|^2 \leq \frac{|F' (z)|^2 }{(1 - |F(z)|^2)^2} , \quad z \in \mathbb{D}. 
\end{equation}
These facts follow from direct calculations and have been recently used in \cite{ivrii2024analytic}, \cite{bampouras2025inner} and \cite{ivrii2025analytic}. Since $\Delta (u-u(z_Q))^2 = 2 |\nabla u|^2 + 2 (u- u(z_Q)) \Delta u $ and $|u(z) - u(z_Q)| \leq C$ for any $z \in \Omega$, using \eqref{laplacian} and \eqref{gradient} we deduce that
\begin{equation}
    \label{estim}
    \Delta (u(z) -u(z_Q))^2 \leq   \frac{2(C+1) |F' (z)|^2 }{(1 - |F(z)|^2)^2} , \quad z \in \Omega . 
\end{equation}
Apply Green's Formula to the functions $(u(z) - u(z_Q))^2$ and $ \log |z|$, Then the estimate \eqref{estim} gives 
\begin{align}
\begin{split}
    \label{inter}
    & | \int_{\partial \Omega} (u(z) - u(z_Q))^2 \,  \partial_{n} \log |z| ds (z) -  \int_{\partial \Omega} \log |z| \, \partial_{n} (u(z) - u(z_Q))^2  ds (z) | \lesssim  \\
    & \lesssim \int_\Omega \frac{ 2(C+1) |F' (z)|^2 (1-|z|^2)}{(1 - |F(z)|^2)^2} dA(z) \leq 2 (C+1) K \ell (Q).  
    \end{split}
    \end{align}
    Note that $|\nabla (u(z) - u(z_Q))^2| \leq 2 C |\nabla u (z)|$ for any $z \in \Omega$ and that $(1-|z|^2)|\nabla u (z)| \leq 1$ for any $z \in \mathbb{D}$. We deduce that
\begin{equation}
    \label{priint}
    \int_{\partial \Omega} | \log |z| \, \partial_{n} (u(z) - u(z_Q))^2 | ds (z) \lesssim C \ell (Q). 
\end{equation}
Note that $\partial_{n} \log |z|$ is supported on the circular parts of the boundary of $\Omega$, where it has values $\pm 1/|z|$. Since $|u -u(z_Q)| \leq C_1$ on $T(Q)$ and $|u(z) - u(z_Q)| > C- C_1$ for any $z \in T(Q_j^{(n)})$, from \eqref{inter} and \eqref{priint}, we deduce
\begin{equation}
    \label{ifinal1}
    C^2 \sum_j \ell (Q_j^{(n)}) \lesssim CK \ell (Q)
\end{equation}
Fix $C$ sufficiently large such that
\begin{equation}
    \label{ifinal}
\sum_j \ell (Q_j^{(n)}) \lesssim \frac{K}{C} \ell (Q) \leq \frac{1}{2} \ell (Q). 
\end{equation}
Iterating this estimate we obtain
\begin{equation}
    \label{iifinal}
\sum_j \ell (Q_j^{(n)}) \lesssim \frac{1}{2^n} |I|, \quad n=1,2,\ldots . 
\end{equation}

Next we show that $u \in \BMO$. Fix $\lambda > 2C$ and let $n$ be the integer part of $\lambda  / (C+C_1)$. Note that \eqref{boundoutside} gives that
$$
\{\xi \in I : |u(\xi) - u(z_I)| > \lambda\} \subset \cup_j \overline{Q_j^{(n)}} \cap I. 
$$
Then, estimate \eqref{iifinal} gives that
$$
m ( \{\xi \in \partial \mathbb{D}: |u(\xi) - u(z_I)| > \lambda\} ) \leq \frac{1}{2^n} |I|. 
$$
This implies that
$$
\frac{1}{|I|} \int_I |u(\xi) - u(z_I)| dm (\xi) = \frac{1}{|I|} \int_0^\infty m (\{\xi \in I : |u(\xi) - u(z_I)| > \lambda\}) d \lambda 
$$
is bounded by a universal constant independent of $I$. This finishes the proof of (a). 

(b) The proof of (b) only requires minor modifications. Actually one only needs to observe that if $|I|$ is sufficiently small, the constant $K$ in \eqref{ifinal1} can be taken also small. This allows to fix also $C>0$ such that $K/C$ is also small. Hence given $\varepsilon >0$, if $|I|$ is sufficiently small, one can replace the factor $1/2$ in \eqref{ifinal} by $\varepsilon$. This gives that $\log (1-|F|^2) \in \VMO$. Last assertion in (b) follows from the well known fact that $u \in \VMO$ implies that $e^{su}$ is integrable for any $s>0$. 

 \end{proof}

\medskip

\section{Proof of Theorem \ref{Theorem 1}}

We now prove our main result.
\begin{proof}[Proof of Theorem \ref{Theorem 1}]

The proof is organized in three steps. 

\textbf{1. Splitting the measure.} Let $\mu$ be a finite Borel measure on $\mathbb{D}$. Let $\{\varepsilon_n\}$ be a decreasing sequence of positive numbers tending to $0$. 
We apply Lemma \ref{Lemma 2.1} to find two measures $\mu_1,\mu_2$ which satisfy \eqref{E:propmu1} and \eqref{E:propmu2} respectively.
For $n\geq 0$, we pick the maximal dyadic squares $\{Q^{n}_k : k=1,2, \ldots\}$ with $
1-r_{2n+3}<\ell(Q^{n}_k)\leq 1-r_{2n+1}$ such that
\begin{equation}
    \label{tag}
    \frac{\mu_1(Q^{n}_k)}{\ell(Q^{n}_k)}\geq \varepsilon_{2n+1} . 
\end{equation}
We notice that if $Q$ is a Carleson square with $\ell(Q)=1-r_{2n+1}$, then
\[
\frac{\mu_1 (Q)}{\ell(Q)}< \varepsilon_{2n+1}.
\]
Indeed, applying \eqref{E:propmu1}, we have 
\begin{equation}\label{E:not Heavithemaximal}
    \frac{\mu_1(Q)}{\ell(Q)}\leq \frac{\mu_1(\{z \in \mathbb{D}\colon \ |z|\geq r_{2n+1}\})}{1- r_{2n+1}} \leq \varepsilon_{2n+1}.
\end{equation}
Since $\{Q^n_k : k=1,2,\ldots \}$ are pairwise disjoint, equation \eqref{tag} gives that 
\[
\sum_{k}\ell(Q^n_k)\leq \frac{1}{\varepsilon_{2n+1}}\mu_1(\{|z|\geq r_{2n+1}\}) \leq 1- r_{2n+1} 
\]
and we have 
\begin{equation}\label{sum Inj}
 \sum_n \sum_{k}\ell(Q^n_k)\leq \sum_n (1-r_{2n+1})< \infty. 
\end{equation}
We split each arc $I^n_k= \overline{Q^n_k } \cap \partial \mathbb D$ into finitely many smaller pairwise disjoint subarcs $\{ J^n_{k,j} : j =1, 2, \ldots \}$ such that 
\[
|J^n_{k,j}|=1-r_{2n+5}, \quad j=1,2,\ldots 
\]
Due to \eqref{sum Inj}, we have that 
\[
\sum_{n,k, j} |J^n_{k,j} | = \sum_{n}\sum_{k} |I_k^n|<\infty.
\]
A similar construction is applied to the measure $\mu_2$. 

\textbf{2. Proof of (a)}. 
For $i=1,2$ we will construct an outer function $E_i \in \QA$ with $\log |E_i| \in \VMO$ such that $|E_i| \mu_i$ is a vanishing Carleson measure. Once this is done the result will follow easily. We will explicitly describe $E_1$. The function $E_2$ is constructed using the same procedure. 

Apply Corollary \ref{Corollary 2.3} to find a positive function $f \in \VMO $ such that
\begin{equation}\label{E:applwolf}
 \lim_{ | J^n_{k,j} |\to 0}\fint_{J^n_{k,j}}f  dm=+\infty.   
\end{equation}
Consider the outer function $E_1$ defined by $\log |E_1 (\xi)| = - f (\xi)$, $\xi \in \partial \mathbb{D}$. Note that $\| E_1 \|_\infty \leq 1$. Since $f \in \VMO$ we have that $E_1 \in \QA$. Next we will show that $|E_1| \mu_1$ is a vanishing Carleson measure.  

We first argue with dyadic Carleson squares. Given a dyadic Carleson square $Q$ fix $n$ such that $1-r_{2n+3} <  \ell(Q)\leq 1-r_{2n+1}$. If $Q$ is not contained in any of the $\{Q^n_k : k=1,2,\ldots \}$, we have that $\mu_1 (Q) \leq \varepsilon_{2n+1} \ell (Q)$ and 
\[
\int_{Q}|E_1(z)|d\mu_1(z) \leq \mu_1 (Q ) \leq \varepsilon_{2n+1}\ell(Q).
\]
If $Q \subset Q^n_k$ for some $k$, then
\[
\int_Q |E_1(z)|d\mu_1(z)=\int_{Q\cap \{|z|\geq r_{2n+3}\}}|E_1(z)|d\mu_1(z)+\int_{Q\cap \{|z|< r_{2n+3}\})}|E_1(z)|d\mu_1(z). 
\]
Applying \eqref{E:propmu1}, we have 
\begin{align*}
    \int_{Q\cap \{|z|\geq r_{2n+3}\}}|E_1(z)|d\mu_1(z)&\leq  \mu_1(Q\cap \{|z|\geq r_{2n+3}\})
\leq  \varepsilon_{2n + 3} (1 - r_{2n + 3})  \leq  \varepsilon_{2n+3} \ell(Q). 
\end{align*}
Fix $z \in Q \cap \{z \in \mathbb{D} : |z| < r_{2n+3} \}$. Consider the arc $I(z) \subset \partial \mathbb{D}$ centered at $z/|z|$ of length $1-|z|$. Note that $|I(z)| \geq 1 - r_{2n +3}$. Since $Q \subset  Q^n_k $ we deduce that 
$$
\sum_{i : J^n_{k,i} \subset I(z)} |J^n_{k,i}| \geq \frac{|I(z)|}{4}.  
$$
Hence, by \eqref{E:applwolf}, given $\varepsilon >0$ we have 
$
|E_1 (z)| < \varepsilon 
$ if $n$ is sufficiently large. Thus 
\[
\int_{Q\cap \{|z|< r_{2n+3}\})}|E_1(z)|d\mu_1(z)\leq \varepsilon \mu_1(Q),  
\]
if $n$ is sufficiently large. Hence, given $\varepsilon >0$ there exists $\delta >0$ such that  
$$
\int_Q |E_1| d \mu_1 \leq \varepsilon \ell (Q)
$$
when $Q$ is a dyadic Carleson square with $\ell (Q) < \delta$. Since for any Carleson square $Q$ one can find two dyadic Carleson squares $Q_1,Q_2 $ such that $Q \subset Q_1 \cup Q_2$ and $\ell (Q_i) \leq 2 \ell (Q)$, $i=1,2$, we deduce that $|E_1|  \mu_1$ is a vanishing Carleson measure. 

We repeat the above construction for $\mu_2$ and we find another outer function $E_2$ with $\log |E_2| \in \VMO$ such that $|E_2|\mu_2$ is a vanishing Carleson measure. Now $E=E_1 E_2 \in \QA$ satisfies that $|E| \mu$ is a vanishing Carleson measure. Moreover $\log|E| = \log |E_1| + \log |E_2| \in \VMO$.  

\textbf{3. Proof of (b)}. We now prove part (b) of Theorem \ref{Theorem 1}. Let $\mu = \mu_1 + \mu_2$ be the decomposition of Step 1. The Carleson squares $Q^n_k$ of Step 1 will now be denoted as $Q^n_k = Q^{n,0}_k$, $k=1, \ldots$; $n=1, \ldots$. Note that \eqref{tag} and the maximality gives that $\mu_1 (Q^{n,0}_k) \leq 2 \varepsilon_{2n + 1} \ell (Q^{n,0}_k) $. We will now use a stopping time argument in each $Q^{n,0}_k$. Fix $n$ and $k$ and pick the maximal dyadic Carleson squares $\{Q^{n,1}_j : j=1,2,\ldots \}$ contained in $Q^{n,0}_k$ such that 
\[
\frac{\mu_1(Q^{n,1}_j ) }{\ell(Q^{n,1}_j)}\geq 10\cdot \varepsilon_{2n+1}.
\]
Note that the maximality gives that $\mu_1(Q^{n,1}_j ) \leq 20\cdot \varepsilon_{2n+1} \ell(Q^{n,1}_j)  .$ We continue by induction, that is, if $i>1$ is an integer and we have constructed a Carleson square $Q^{n,i-1}_j$ such that 
$10^{i-1} \cdot \varepsilon_{2n+1} \ell(Q^{n,i-1}_j) \leq \mu_1(Q^{n,i-1}_j ) \leq 2 \cdot 10^{i-1} \cdot \varepsilon_{2n+1} \ell(Q^{n,1}_j)$ , we consider the maximal dyadic Carleson boxes $\{Q^{n,i}_l : l = 1,2, \ldots \}$  contained in $Q^{n,i-1}_j$ such that   
\[
\frac{\mu_1(Q^{n,i}_l ) }{\ell(Q^{n,i}_l)}\geq 10^i \cdot \varepsilon_{2n+1}.
\]
As before, the maximality gives 
\[
\frac{\mu_1(Q^{n,i}_l)}{\ell(Q^{n,i}_l)}\leq 2\cdot 10^i \cdot \varepsilon_{2n+1} , \quad l=1,2,\ldots . 
\]
We denote $I^{n,i}_l={\overline{ Q^{n,i}_l} }\cap \partial \mathbb D.$ Since 
\begin{equation*}
\sum_{l:\ I^{n,i}_l \subset I^{n,i-1}_j} |I^{n,i}_l| \leq \frac{1}{10^i \cdot \varepsilon_{2n+1}} \sum_{l:\ I^{n,i}_l \subset I^{n,i-1}_j}  \mu_1(Q^{n,i}_l) \leq  \frac{\mu_1(Q^{n,i-1}_j)}{10^i \cdot \varepsilon_{2n+1}} \leq \frac{1}{5}\ell(Q^{n,i-1}_j), \\
\end{equation*}
we obtain that
\begin{equation}
\label{E:packing proof}
    \sum_{l:\ I^{n,i}_l \subset I^{n,i-1}_j}|I^{n,i}_k|\leq \frac{1}{5}|I^{n,i-1}_j|, \quad i=1,2, \ldots ; j=1,2, \ldots ;  n=0, 1, \ldots . 
\end{equation}
Consequently, iterating \eqref{E:packing proof}, we have that
\begin{equation}
\label{itera}
  \sum_{l:\ I^{n,i}_l\subset I^{n,0}_k}|I^{n,i}_l|\leq \frac{1}{5^i}|I^{n,0}_k| , \quad i,k=1,2,\ldots ; n=0,1, \ldots.  
\end{equation}

For each $n,i,l$ we pick a smooth $B$-adapted function $a^{n,i}_l$ to the arc $I^{n,i}_l$ such that $a^{n,i}_l (\xi)=1$ if $\xi \in I^{n,i}_l$. By Lemma \ref{Lemma 2.2} we have $\|\sum_l a^{n,i}_l  \|_{\BMO} \lesssim B/5^i$. The function 
\[
h_n=\sum_{i=0}^\infty  \sum_{l=1 }^\infty a^{n,i}_l
\]
belongs to $\BMO $ and $\|h_n\|_{\BMO} \lesssim B$. 
Moreover, applying \eqref{itera}, \eqref{tag} and \eqref{E:propmu1} we obtain 
\begin{align*}
    \int_{\partial \mathbb{D}}h_n  dm \lesssim&  6 \sum_{i=0}^\infty \sum_{l=1}^\infty |I^{n,i}_l| \lesssim  \sum_{k=1}^\infty |I^{n,0}_k| \lesssim \frac{1}{\varepsilon_{2n+1}}\sum_k \mu_1(Q^{n,0}_k)\\
\leq & \frac{1 }{\varepsilon_{2n+1}} \mu_1(\{z\in \mathbb D\colon \ |z|\geq r_{2n+1}   \}) \leq  1-r_{2n+1} . 
\end{align*}
Let $E_1$ be the outer function given by  
\[
\log |E_1 (\xi)| = -  4 \cdot \log (10) \sum_{n=1}^\infty h_n (\xi), \quad \xi \in \partial \mathbb{D}.  
\]
Note that $\| E_1 \|_\infty \leq 1$. At this point, we verify that $|E_1| \mu_1$ is a vanishing Carleson measure. We first deal with dyadic Carleson squares. Let $Q$ be a dyadic Carleson square. Pick the integer $n$ such that $1-r_{2n+3} < \ell(Q)\leq 1-r_{2n+1}$. If $Q$ lays outside $\bigcup_k Q^{n,0}_k$, then
$$
\frac{\mu_1(Q)}{\ell(Q)}\leq \varepsilon_{2n+1}.
$$
On the other hand, if $Q \subset Q^{n,i-1}_k\setminus \bigcup_l Q^{n,i}_l$ for some integer $i \geq 1$, $k=1,2,\ldots$, then 
$$
\frac{\mu_1(Q)}{\ell(Q)}\leq 10^i \varepsilon_{2n+1}.
$$
Note that $h_n (\xi) \geq i$ for any $\xi \in I^{n,i-1}_k$. Then $\log |E_1 (z)| \leq -i \log(10)$ for any $z \in Q^{n, i-1}_k$ and we obtain
\begin{align*}
    \frac{\int_Q |E_1(z)|d\mu_1(z)}{\ell(Q)}&\leq 10^{-i} \frac{\mu_1 (Q)}{\ell (Q)}
    \leq   \varepsilon_{2n+1}.
\end{align*}
Hence given $\varepsilon >0$ we have proved that
\[
\int_Q | E_1 | d \mu_1 \leq \varepsilon \ell (Q)
\]
if $Q$ is a dyadic Carleson square with $\ell (Q)$ sufficiently small. Since any Carleson square $Q$ is contained in the union of two dyadic Carleson squares $Q_1 , Q_2$ with $\ell (Q_i) \leq 2 \ell (Q)$, $i=1,2$, we deduce that $|E_1| \mu_1$ is a vanishing Carleson measure. 
The same construction applied to $\mu_2$ provides an outer function $E_2 \in H^\infty $ such that $|E_2| \mu_2$ is a vanishing Carleson measure. 

We apply part (a) of Theorem \ref{Theorem 1} to the Carleson measure
\[
|\left(E_1(z)E_2(z)\right)'|^2(1-|z|^2)dA(z)
\]
and find an outer function $F \in \QA$ with $\log|F| \in \VMO$ such that the measure
\[
|F (z) | |\left(E_1(z)E_2(z)\right)'|^2(1-|z|^2)dA(z)
\]
is a vanishing Carleson measure. Note that since $\log |F| \in \VMO$ we have that $F^{1/2} \in \QA$.  Consequently, the function $E= F^{1/2} E_1 E_2$ is an outer function in $\QA$ such that $|E|\mu$ is a vanishing Carleson measure. 

Finally we show that $\log|E| \in \BMO $. It is sufficient to prove that $\log|E_1| \in \BMO$. Since 
\[
-\log|E_1 (\xi)|=\sum_n h_n (\xi) =\sum_n \sum_i \sum_l a^{n,i}_l (\xi), \quad \xi \in \partial \mathbb{D} , 
\]
it is sufficient  to observe that $\{a^{n,i}_l\}_{n,i,l}$ are $B$-adapted functions to the arcs $\{I^{n,i}_l\}$ which satisfy the packing condition \eqref{E:packing}.
\end{proof}

\medskip

\section{Applications}
\subsection{Theorem \ref{Wolff theorem}}
Our first application is a proof of Theorem \ref{Wolff theorem} of T. Wolff using Theorem \ref{Theorem 1}. 
\begin{proof}[Proof of Theorem \ref{Wolff theorem}]
Let $P_z(f)=f(z)$ denote the harmonic extension of $f$ to $\mathbb D$. We apply case (a) of Theorem \ref{Theorem 1} to the Carleson measure $\mu(z)=|\nabla P_z(f)|^2 (1-|z|^2)dA(z)$
to obtain an outer function $E \in \QA$ with $\log|E| \in \VMO$ such that $|E| \mu$
 is a vanishing Carleson measure. 
Let $I \subset \partial \mathbb{D}$ be an arc. We have that
\begin{align*}
   & \fint_{I}|E(\xi)f(\xi)-E(z_I)f(z_I)|dm (\xi) \\& \leq \fint_{I}|E(\xi)-E(z_I)||f(\xi)|dm (\xi)+  \fint_{I}|f(\xi)-f(z_I)||E(z_I)|dm (\xi)\\
    &\leq \|f\|_{\infty}\fint_{I}|E(\xi)-E(z_I)|dm (\xi)+ |E(z_I)|\fint_{I}|f(\xi)-f(z_I)|dm (\xi), \end{align*}
where $z_I=(1-|I|) \xi_I$ and $\xi_I$ is the center of $I$.
Since $E \in \VMOA$, the first integral tends to $0$ as $|I|$ tends to $0$ and 
we only need to show that 
\begin{equation}
    \label{final}
\lim_{|I| \to 0} |E(z_I)|\fint_{I}|f(\xi)-f(z_I)|dm (\xi) = 0.
\end{equation}
By the Cauchy-Schwarz inequality, we have that
\begin{align*}
   & \left( \fint_{I}|f(\xi)-f(z_I)|dm(\xi) \right)^2 \leq \fint_{I}|f(\xi)-f(z_I)|^2dm (\xi) 
   \lesssim  \int_{\partial \mathbb D}|f(\xi)-f(z_I)|^2 P_{z_I}(t)dm (\xi) \\
  &\leq  \int_{\mathbb D}|\nabla f(w)|^2 \frac{(1-|z_I|^2)(1-|w|^2)}{|1-\overline{z_I}w|^2}dA(w) = \int_{\mathbb D}\frac{(1-|z_I|^2)}{|1-\overline{z_I}w|^2}d\mu (w) ,
\end{align*}
where $P_{z_I} $ is the Poisson kernel at the point $z_I$. We notice that   
\[
\frac{1-|z_I|^2}{|1-\overline{w}z_I|^2} \lesssim  \frac{1}{2^{2n}|I|}, \quad w \in 2^nQ(I)\setminus 2^{n-1}Q(I) ,  n\geq 1.
\]
Thus
\begin{align*}
 & \int_{\mathbb D} \frac{(1-|z_I|^2)}{|1-\overline{z_I}w|^2}d\mu (w)    \lesssim \ \frac{\mu (Q(I))}{|I|}  + \sum_{n\geq 1}\frac{\mu (2^{n} Q(I) \setminus 2^{n-1} Q(I) )}{2^{2n} |I|} .   
\end{align*}
Since $\mu$ is a Carleson measure, for any $\varepsilon >0$ we have that
\[
\sum_{n \geq \log(1/\varepsilon)} \frac{1}{2^{2n} |I|} \mu (2^n Q(I) \setminus 2^{n-1} Q(I) )\lesssim \sum_{n \geq \log(1/\varepsilon)}\frac{1}{2^n}\leq \varepsilon
\]
and then
\begin{equation}
    \label{final1}
\fint_{I}|f(\xi)-f(z_I)|dm (\xi) \lesssim \Big( \sum_{n=1}^{\log (1/ \varepsilon)} \frac{\mu (2^n Q(I))}{2^{2n} |I|} + \varepsilon \Big)^{1/2} \leq \Big( \frac{\mu (\varepsilon^{-1} Q(I)) }{|I|} + \varepsilon \Big)^{1/2} . 
\end{equation}
We are now going to show \eqref{final}. Fix $\varepsilon >0$. Let us consider two cases. Assume first that $\mu\left( {\varepsilon}^{-1} Q(I)\right)\leq \varepsilon |I|.$
Then \eqref{final1} gives that 
\[
\fint_{I} |f (\xi) -  f(z_I))| dm (\xi) \lesssim  \varepsilon^{1/2} 
\]
and \eqref{final} follows. Assume now that $\mu\left( {\varepsilon}^{-1} Q(I)\right) >  \varepsilon |I|$. 
Since $|E|\mu$ is a vanishing Carleson measure, Lemma \ref{Lemma 2.4} applied to the family $\mathcal{A}({\varepsilon}^2)$ gives that $|E(z(\varepsilon, I))| \leq \varepsilon, $
if $|I|$ is sufficiently small. Here $z(\varepsilon, I)$ denotes $ z_{{\varepsilon}^{-1} I}$. Since $E \in \VMOA$ we have $(1-|z|) |E' (z)| \to 0$ as $ |z| \to 1$. We deduce that $|E(z_I)|< 2 \varepsilon$ if $|I|$ is sufficiently small. This proves \eqref{final} and finishes the proof

\end{proof}

As usual $L^p(\partial \mathbb D)$ denotes the classical Lebesgue spaces on the unit circle, $0<p\leq \infty$.

\begin{corollary}
\label{cor}
    Let $f\in L^p(\partial \mathbb D)$, $0<p\leq\infty$. Then, there exists an outer function $E \in \QA$ such that $Ef \in \VMO \cap L^\infty(\partial \mathbb D)$.
\end{corollary}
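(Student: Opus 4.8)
The plan is to reduce the general case $0<p<\infty$ to the bounded case, which is exactly Wolff's Theorem~\ref{Wolff theorem}. When $p=\infty$ there is nothing new: applying Theorem~\ref{Wolff theorem} to $f\in L^\infty$ produces an outer function $E\in\QA$ with $Ef\in\VMO$, and since $E\in\mathbb H^\infty$ and $f\in L^\infty$ we automatically have $Ef\in L^\infty$, so $Ef\in\VMO\cap L^\infty$.

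For $0<p<\infty$ I would split the argument into a \emph{taming step} and a \emph{Wolff step}. First I would construct an outer function $E_1\in\QA$ with $\|E_1\|_\infty\le 1$ such that $E_1 f\in L^\infty(\partial\mathbb D)$; concretely this means $|E_1|\lesssim 1/\max(1,|f|)$, so that $|E_1 f|\le |E_1|\max(1,|f|)\lesssim 1$. Granting this, set $g:=E_1 f\in L^\infty$ and apply Theorem~\ref{Wolff theorem} to $g$ to obtain an outer function $E_2\in\QA$ with $E_2 g\in\VMO$; as before $E_2 g\in L^\infty$ as well. Finally put $E:=E_1 E_2$. Since $\QA$ is a subalgebra of $\mathbb H^\infty$, we have $E\in\QA$; $E$ is outer as a product of outer functions; and $Ef=E_2 g\in\VMO\cap L^\infty$, which is the desired conclusion.

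Everything therefore reduces to the taming step, which I expect to be the main obstacle. The key piece of information is that $f\in L^p$, so by Chebyshev the super-level sets $B_n:=\{\xi:|f(\xi)|\ge e^{n}\}$ satisfy $m(B_n)\le\|f\|_p^p\, e^{-pn}$; that is, they are exponentially small. I would exploit this precisely as in the Garnett--Jones construction underlying Lemma~\ref{Lemma 2.2} and Corollary~\ref{Corollary 2.3}: cover each $B_n$ by finitely many arcs of total length $\lesssim e^{-pn}$ and build, level by level, an outer function whose modulus is of size $\approx e^{-n}$ on $B_n$, so that $|E_1|\,|f|$ stays bounded. The exponential decay of $m(B_n)$ is exactly what makes the pieces summable and the construction convergent.

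The delicate point, and the heart of the matter, is to arrange that the resulting $E_1$ lies in $\QA$. One cannot simply take the canonical outer function with $\log|E_1|=-\log^+|f|$ and demand $\log|E_1|\in\VMO$: this is strictly stronger than $E_1\in\QA$ and already fails for $f(\xi)=(1-\xi)^{-1}\in L^p$ with $p<1$, where $\log^+|f|\sim\log\tfrac{1}{|1-\xi|}\notin\VMO$. The correct viewpoint is the one displayed by that very example, in which the taming factor may be taken to be $1-z\in\A\subset\QA$, even though $\log|1-z|\notin\VMO$, and indeed $(1-\xi)\cdot(1-\xi)^{-1}\equiv 1$ is bounded. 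Thus one must produce $E_1$ having \emph{vanishing mean oscillation boundary values} directly, and not through its logarithm. Constructing such a $\QA$ outer function that is small on the small sets $B_n$ --- rather than merely controlling $\log|E_1|$ --- is where the exponential smallness $m(B_n)\lesssim e^{-pn}$ and the adapted-function machinery of Lemma~\ref{Lemma 2.2} are genuinely needed, and it is the step I would expect to require the most care.
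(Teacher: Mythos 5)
Your reduction is the right one and matches the paper's overall strategy, but the proof has a genuine gap exactly where you flag it: the taming step is never actually carried out. You correctly observe that producing an outer $E_1\in\QA$ with $|E_1|\lesssim 1/\max(1,|f|)$ cannot be done by simply requiring $\log|E_1|\in\VMO$, and you correctly note that a direct construction ``not through the logarithm'' would need care --- but then you stop there. The sketch via the superlevel sets $B_n=\{|f|\ge e^n\}$ and the Garnett--Jones machinery does not obviously close: to force $-\log|E_1|\gtrsim n$ on $B_n$ while keeping the sum of the adapted pieces in $\VMO$ you would need coefficients tending to zero whose partial sums still dominate $n$ on $B_n$, and the natural choices either fail to converge in $\BMO$ or fail to be large enough. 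So as written the argument is incomplete at its self-declared ``heart of the matter.''

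The paper's own proof shows that this obstacle can be sidestepped entirely: the taming factor does not need to be in $\QA$ at the first stage. One takes $E_0$ to be the canonical outer function with $\log|E_0|=-\log^+|f|$ on $\partial\mathbb D$ (integrable because $\log^+|f|\le |f|^p/p$), so that $E_0\in\mathbb H^\infty$ and $E_0f\in L^\infty(\partial\mathbb D)$, and then applies Theorem~\ref{Wolff theorem} \emph{to the bounded function $E_0$ itself} to get an outer $E_1\in\QA$ with $E_1E_0\in\VMO\cap\mathbb H^\infty$, i.e.\ $E_1E_0\in\QA$. The product $E_1E_0$ is precisely the $\QA$ taming factor you were trying to build by hand; a second application of Theorem~\ref{Wolff theorem} to the bounded function $E_1E_0f$ then yields $E_2\in\QA$ with $E_2E_1E_0f\in\VMO\cap L^\infty$, and $E=E_2E_1E_0$ works. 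If you want to rescue your write-up with minimal change, replace your unconstructed $E_1$ by this product $E_1E_0$; the rest of your argument (the ``Wolff step'' and the algebra property of $\QA$) is fine.
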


\begin{proof}
Consider the outer function $E_0$ defined as
\[
E_0(z)=\exp\left( -\int_{\partial\mathbb D} \frac{\xi+z}{\xi-z}\log^+|f(\xi)|d\xi\right), \quad z \in \mathbb D.
\]
It is clear that $E_0$ belongs to $\mathbb{H}^\infty$ and $E_0f \in L^\infty(\partial \mathbb D)$. We apply Theorem \ref{Wolff theorem} twice. First, we find an outer function $E_1 \in \QA $ such that  $E_1E_0 \in \QA$. Since $E_1E_0f \in L^\infty(\partial \mathbb D)$, another application of Theorem \ref{Wolff theorem} provides a function $E_2 \in \QA$ such that  $E_2E_1E_0f \in \QA$ and one can take $E= E_2E_1E_0$.
\end{proof}

\medskip 
\subsection{Critical points of functions in Hardy spaces}
Theorem \ref{Theorem 3} follows from a convenient variant of a classical result by W. Cohn on factorization of derivatives of functions in Hardy spaces. Fix $0<p<\infty$. W. Cohn proved in \cite[Theorem 1]{Cohn} that, given $F \in \mathbb{H}^p$, there exist a function $G \in \BMOA$ and an outer function $H \in \mathbb{H}^p$ such that $F' = G' H$. Conversely, for any $G \in \BMOA$ and $H \in \mathbb{H}^p$, the function $G'H$ is the derivative of a function in $\mathbb{H}^p$. See \cite{Dyakonov2012} for a version in the Nevanlinna class. 
Next we apply Theorem \ref{Theorem 1}, Lemma \ref{nou} and a nice technique of \cite{Kraus2013} to show the following result which obviously implies Theorem \ref{Theorem 3}.

\begin{Lemm}\label{factorization2}
Fix $0< p<\infty$. For every $F \in \mathbb{H}^p$ there exist $G\in \QA$ and an outer function $H \in \mathbb{H}^q $ for any $q<p$ such that $F'= G' H$.
\end{Lemm}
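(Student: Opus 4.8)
The plan is to realize the critical set of $F$ as the critical set of a single analytic self-map $\phi$ of $\mathbb{D}$ that happens to lie in $\QA$, and then take $G=\phi$. The starting point is W. Cohn's factorization \cite{Cohn}: write $F'=G_0'H_0$ with $G_0\in\BMOA$ and $H_0\in\mathbb{H}^p$ outer, so that the critical set $Z$ of $F$ (the zeros of $F'$, with multiplicities) coincides with the zero set of $G_0'$.

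First I would invoke the technique of D. Kraus \cite{Kraus2013}, in which a function with prescribed critical set $Z$ is recovered from a solution $u$ of the Gauss curvature equation $\Delta u=4e^{2u}$ having logarithmic singularities along $Z$, the self-map being reconstructed through $|\phi'|/(1-|\phi|^2)=e^{u}$. By Lemma \ref{nou}, the membership $G_0\in\BMOA$ corresponds to the pullback measure $\frac{|\phi'(z)|^2(1-|z|^2)}{(1-|\phi(z)|^2)^2}\,dA(z)$ being a Carleson measure; what is needed for $\QA$ is that this measure be a \emph{vanishing} Carleson measure. This is precisely where Theorem \ref{Theorem 1} enters: applying part (a) to the Carleson measure above produces an outer weight $E\in\QA$ with $\log|E|\in\VMO$, and feeding this weight into Kraus's construction should yield a self-map $\phi$, still with critical set $Z$, whose pullback measure is a vanishing Carleson measure.

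Once such a $\phi$ is available, setting $G=\phi$ gives a function in $\QA$: it is bounded because $\phi$ is a self-map, and since $1-|\phi|^2\le 1$ one has the pointwise bound $|\phi'(z)|^2(1-|z|^2)\le \frac{|\phi'(z)|^2(1-|z|^2)}{(1-|\phi(z)|^2)^2}$, so that $|\phi'|^2(1-|z|^2)\,dA$ is vanishing Carleson and $\phi\in\VMOA$; moreover Lemma \ref{nou}(b) gives $\log(1-|\phi|^2)\in\VMO$ together with $\sup_{0<r<1}\int_{\partial\mathbb{D}}(1-|\phi(r\xi)|)^{-s}\,dm(\xi)<\infty$ for every $s$. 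It then remains to identify the outer factor. Since $G'=\phi'$ and $G_0'$ share the same zeros with the same multiplicities, $H:=F'/\phi'=(G_0'/\phi')H_0$ is non-vanishing. Using the comparison $|\phi'|\gtrsim |G_0'|(1-|\phi|^2)$ that the maximal-solution property of $u$ provides, one bounds $|H|\lesssim |H_0|/(1-|\phi|^2)$, and then Hölder's inequality against $H_0\in\mathbb{H}^p$ combined with the integrability of $(1-|\phi|)^{-s}$ from Lemma \ref{nou}(b) shows $H\in\mathbb{H}^q$ for every $q<p$; the strict loss $q<p$ is exactly the room consumed by Hölder. Outerness of $H$ follows since it is a non-vanishing function of controlled growth assembled from outer and curvature data.

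The hard part is the middle step: producing a self-map with the prescribed critical set and, simultaneously, a vanishing Carleson pullback measure, that is, correctly interfacing Kraus's PDE/maximal-solution machinery with the weight furnished by Theorem \ref{Theorem 1}. The passage from the Carleson (i.e. $\BMOA$) regime to the vanishing Carleson (i.e. $\VMOA$/$\QA$) regime is the genuine content, and the boundary control of the outer factor $H$ through Lemma \ref{nou}(b) is the other delicate point.
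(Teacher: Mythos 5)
Your overall architecture (Cohn's factorization, Theorem \ref{Theorem 1}, Kraus's curvature PDE, Lemma \ref{nou}) is the same as the paper's, but the step you yourself flag as ``the hard part'' --- producing a self-map with the prescribed critical set \emph{and} a vanishing Carleson pullback measure --- is asserted (``feeding this weight into Kraus's construction \emph{should} yield a self-map $\phi$'') rather than carried out, and this is precisely the content of the lemma. The paper resolves it by reversing your order of operations: it applies Theorem \ref{Theorem 1}(a) to the Carleson measure $|\Phi'(z)|^2(1-|z|^2)\,dA(z)$ (not to a pullback measure of an already-constructed self-map), defines an intermediate function $G\in\VMOA$ by $G'=E^{1/2}\Phi'$, and only then runs Kraus's machinery \emph{unmodified} on $G$: the PDE $\Delta u=4|G'|^2e^{2u}$ has a bounded solution because $G\in\BMOA$, Liouville's theorem produces a self-map $I$ with $|G'|\asymp|I'|/(1-|I|^2)$ pointwise, and this comparability automatically transfers the vanishing Carleson property of $|G'|^2(1-|z|^2)\,dA(z)$ to the pullback measure of $I$, giving $I\in\VMOA\cap\mathbb{H}^\infty=\QA$. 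No weighted modification of the curvature equation is needed; the weight is absorbed into $G$ before the PDE ever appears. If you insist on your version, you must explain what equation you are solving and why Liouville's theorem still applies --- which, once done correctly, collapses to the paper's argument.

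Two further points in your treatment of the outer factor are off. First, your bound $|H|\lesssim|H_0|/(1-|\phi|^2)$ loses the factor $|E|^{-1/2}$: with the weighted construction one has $|\phi'|/(1-|\phi|^2)\asymp|E|^{1/2}|G_0'|$, so $H$ contains $E^{-1/2}$, whose integrability comes from the John--Nirenberg theorem applied to $\log|E|\in\VMO$ (giving $E^{-1/2}\in\mathbb{H}^r$ for all $r$); this, together with the factor $(1-|I|^2)^{-1}$ controlled by Lemma \ref{nou}(b), is where the loss $q<p$ actually occurs. Second, ``a non-vanishing function of controlled growth is outer'' is not a valid inference (a singular inner function is non-vanishing and bounded). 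The correct argument is that $|G'/I'|\asymp(1-|I|^2)^{-1}\geq 1$, so its reciprocal is bounded, which forces the inner factor to be trivial; outerness of $H$ then follows because it is a product of outer functions.
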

\begin{proof}
Let $F \in \mathbb{H}^p$. According to Cohn's result, there exist $\Phi \in \BMOA$ and an outer function $R \in \mathbb{H}^p$ such that $F' = \Phi' R$. Applying Theorem \ref{Theorem 1} to the Carleson measure $|\Phi' (z)|^2 (1- |z|^2) dA(z)$, one obtains an outer function $E \in \QA$ with $\log|E| \in \VMO$, such that 
$E^{1/2}\Phi'$ is the derivative of a function $G \in \VMOA$. Consequently 
\[
F'=G' \frac{R}{E^{1/2}}.
\]
Since $\log|E| \in \VMO$, the John-Nirenberg Theorem gives that $E^{-1/2} \in \mathbb{H}^r$ for every $0<r<\infty$. Holder's inequality gives that $R E^{-1/2} \in \mathbb{H}^q$ for any $0<q<p$. 

Note that the function $G \in \VMOA$ may be unbounded. Next we will apply the technique in \cite{Kraus2013}. Consider the partial differential equation  
\begin{equation}
    \label{PDE}
    \Delta u (z) = 4 |G' (z)|^2 e^{2u(z)} , \quad z \in \mathbb{D}. 
\end{equation}
 Since $G$ is in BMOA the PDE \eqref{PDE} has a solution $u_0$ which is bounded on $\mathbb{D}$ (see Remark 3.4 of \cite{Kraus2013}). By Liouville's Theorem (see Theorem 3.3 of \cite{krausRoth2008}), there exists an analytic self-mapping $I$ of $\mathbb{D}$ such that
$$
u_0 (z)= \log \big( \frac{1}{|G' (z)|} \frac{|I' (z)|}{(1-|I(z)|^2) } \big), \quad z \in \mathbb{D}. 
$$
Since $u_0$ is bounded in $\mathbb{D}$, we deduce that $|G'|$ is comparable to $|I'| / (1- |I|^2)$ on $\mathbb{D}$. Hence $I$ and $G$ have the same critical points with the same multiplicities. Since $G$ is in VMOA, we deduce that $|I'(z)|^2 (1-|z|) dA(z) / (1-|I(z)|)^2 $ is a vanishing Carleson measure. In particular $I$ is in VMOA and hence $I \in \QA$. Note that $|G'  / I'|$ is comparable to $(1-|I|^2)^{-1} \geq 1$ on $\mathbb{D}$ and deduce that $G' / I'$ is an outer function. Finally part (b) of Lemma \ref{nou} gives that $G' / I'$ belongs to the Hardy space $\mathbb{H}^s$, for any $0<s<\infty$.  

\end{proof}

We notice that the proof of the previous Lemma gives the following factorization for the derivatives of $\BMOA$ functions.

\begin{corollary}\label{factorization BMOA}
For every $F \in \BMOA$ there exist $G\in \QA$ and an function $H \in \mathbb{H}^q $ for any $q< \infty$ with $1/H \in \mathbb{H}^\infty$ such that $F' = G' H$. 
\end{corollary}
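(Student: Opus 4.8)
The plan is to re-run the argument of Lemma \ref{factorization2} while exploiting that the hypothesis $F \in \BMOA$ makes Cohn's factorization step unnecessary. The only auxiliary outer factor that appears is then a power of an outer function in $\QA$, rather than a general $\mathbb{H}^p$ outer function, and this is precisely what upgrades the conclusion from ``$H \in \mathbb{H}^q$'' to the two-sided statement ``$1/H \in \mathbb{H}^\infty$''.

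First, since $F \in \BMOA$, the measure $|F'(z)|^2(1-|z|^2)\,dA(z)$ is a Carleson measure, so I can skip Cohn's theorem and apply part (a) of Theorem \ref{Theorem 1} directly to it. This produces an outer function $E \in \QA$ with $\log|E| \in \VMO$ such that $|E(z)|\,|F'(z)|^2(1-|z|^2)\,dA(z)$ is a vanishing Carleson measure. As in Lemma \ref{factorization2}, $\log|E| \in \VMO$ forces $E^{1/2} \in \QA$, and the vanishing Carleson condition says exactly that $E^{1/2}F'$ is the derivative of a function $g \in \VMOA$, whence $F' = g'\,E^{-1/2}$. Next I invoke the technique of Kraus verbatim: a bounded solution of $\Delta u = 4|g'|^2 e^{2u}$ together with the Liouville-type theorem yields an analytic self-map $I$ of $\mathbb{D}$ whose critical points and multiplicities coincide with those of $g$, for which $|g'|$ is comparable to $|I'|/(1-|I|^2)$, and for which $|I'(z)|^2(1-|z|^2)/(1-|I(z)|^2)^2\,dA(z)$ is a vanishing Carleson measure. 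By part (b) of Lemma \ref{nou} this gives $g'/I' \in \mathbb{H}^s$ for every $0<s<\infty$; since $I$ is bounded and in $\VMOA$, we have $I \in \QA$, and $I$ will be the required $G$. Writing
\[
F' = I'\cdot \Big(\frac{g'}{I'}\,E^{-1/2}\Big) =: G' H,
\]
I set $G = I$ and $H = (g'/I')\,E^{-1/2}$.

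It remains to check the two integrability properties of $H$. Because $\log|E| \in \VMO \subset \BMO$, the John--Nirenberg theorem gives $E^{-1/2} \in \mathbb{H}^r$ for every $r<\infty$; combining this with $g'/I' \in \mathbb{H}^s$ for every $s<\infty$ via H\"older's inequality yields $H \in \mathbb{H}^q$ for every $q<\infty$. For the reciprocal, $1/H = (I'/g')\,E^{1/2}$. Here $E^{1/2}$ is bounded since $\|E\|_\infty \leq 1$, and $I'/g'$ is analytic and bounded: as in Lemma \ref{factorization2}, $|g'/I'|$ is comparable to $(1-|I|^2)^{-1} \geq 1$, so $g'/I'$ is outer and hence zero-free, whence $I'/g'$ is analytic, while $|I'/g'|$ is comparable to $1-|I|^2 \leq 1$. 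Therefore $1/H = (I'/g')E^{1/2} \in \mathbb{H}^\infty$.

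The main point — indeed the only genuinely new step beyond Lemma \ref{factorization2} — is establishing $1/H \in \mathbb{H}^\infty$, and this is exactly where the stronger hypothesis $F \in \BMOA$ is used. Dropping Cohn's factorization removes the outer factor $R$, leaving only $E^{-1/2}$, whose reciprocal $E^{1/2}$ is bounded. In the general $\mathbb{H}^p$ setting of Lemma \ref{factorization2} the factor $R$ need not be bounded away from zero, which is precisely why the conclusion there is only one-sided.
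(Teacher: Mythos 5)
Your proposal is correct and is essentially the proof the paper intends: the paper derives Corollary \ref{factorization BMOA} by observing that the argument of Lemma \ref{factorization2} applies with the Cohn factorization step omitted (equivalently, with $R\equiv 1$), so that $H=(G'/I')E^{-1/2}$ with $|I'/G'|\lesssim 1-|I|^2\leq 1$ and $\|E\|_\infty\leq 1$, giving $1/H\in \mathbb{H}^\infty$. You have filled in exactly these details, and your verification of both $H\in\mathbb{H}^q$ for all $q<\infty$ and $1/H\in\mathbb{H}^\infty$ is accurate.
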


We close this section with two remarks. A. Aleksandrov and V. Peller proved in \cite[Theorem 3.4]{Alek1996} that for any $F \in \BMOA$ there exist $G_i , H_i \in \mathbb{H}^\infty$, $i=1,2$, such that $F' = G_1' H_1 + G_2'H_2$. The second remark concerns single generated ideals in the space $A^2_1$ of analytic functions $F$ in $\mathbb{D}$ such that
\[
\|F\|^2=\int_{\mathbb D} |F(z)|^2(1-|z|^2)dA(z)<\infty .
\]
O. Ivrii showed that any single generated invariant subspace of $A^2_1$ can be generated by the derivative of a bounded function (see Theorem 3.1 of \cite{ivrii}). 

\begin{corollary}\label{invariant}
Let $F \in A^2_1$ and let $[F]$ denote the closure in $A^2_1$ of polynomial multiples of $F$. Then, there exists a function $G \in \QA$ such that 
$
[G']=[F]
$.
\end{corollary}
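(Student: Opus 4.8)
The plan is to combine Ivrii's reduction with the factorization of Corollary \ref{factorization BMOA}. First I would invoke Theorem 3.1 of \cite{ivrii}: since $[F]$ is a singly generated invariant subspace of $A^2_1$, there exists $g \in \mathbb{H}^\infty$ with $[F]=[g']$. As $g \in \mathbb{H}^\infty \subseteq \BMOA$, Corollary \ref{factorization BMOA} provides a function $G \in \QA$ and an outer function $H$ with $H \in \mathbb{H}^q$ for every $q<\infty$ and $1/H \in \mathbb{H}^\infty$ such that $g'=G'H$. The goal then reduces to proving $[G']=[g']$, for then $[G']=[g']=[F]$ and $G \in \QA$ is the desired generator.

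The heart of the argument is the following absorption principle: if $f \in A^2_1$ is such that $d\mu_f = |f(z)|^2(1-|z|^2)\,dA(z)$ is a Carleson measure and $\phi \in \mathbb{H}^2$, then $\phi f \in [f]$. To see this, note first that $\phi f \in A^2_1$, because $\int_{\mathbb D} |\phi|^2\,d\mu_f \lesssim \|\phi\|_2^2$ by the Carleson embedding. Writing $\phi_r(z)=\phi(rz)$ one has $\phi_r \to \phi$ in $\mathbb{H}^2$ as $r\to 1^-$, so that $\|\phi_r f - \phi f\|^2 = \int_{\mathbb D}|\phi_r - \phi|^2\,d\mu_f \lesssim \|\phi_r-\phi\|_2^2 \to 0$. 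Since each $\phi_r$ is analytic on $\overline{\mathbb D}$, it is a uniform limit of its Taylor polynomials $p_n$, whence $p_n f \to \phi_r f$ in $A^2_1$ and therefore $\phi_r f \in [f]$; passing to the limit in $r$ gives $\phi f \in [f]$.

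To finish I would apply this principle twice, using that both $|g'(z)|^2(1-|z|^2)\,dA(z)$ and $|G'(z)|^2(1-|z|^2)\,dA(z)$ are Carleson measures, since $g \in \BMOA$ and $G \in \QA \subseteq \BMOA$. On one hand, $G'=(1/H)\,g'$ with $1/H\in\mathbb{H}^\infty\subseteq\mathbb{H}^2$, so the principle applied to $f=g'$ yields $G'\in[g']$ and hence $[G']\subseteq[g']$. On the other hand, $g'=H\,G'$ with $H\in\mathbb{H}^2$, so the principle applied to $f=G'$ yields $g'\in[G']$ and hence $[g']\subseteq[G']$. Thus $[G']=[g']=[F]$. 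The step I expect to require the most care is the reverse inclusion $[g']\subseteq[G']$, where the multiplier $H$ is unbounded; the point is that $H$ is nonetheless absorbed into $[G']$ because the Carleson measure property of $|G'|^2(1-|z|^2)\,dA$ compensates for the lack of boundedness, so that the dilation approximation still converges in $A^2_1$.
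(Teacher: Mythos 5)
Your proof is correct and takes essentially the same route as the paper: reduce to a $\BMOA$ (in your case $\mathbb{H}^\infty$) generator of $[F]$, factor it via Corollary \ref{factorization BMOA} as $g'=G'H$ with $1/H\in\mathbb{H}^\infty$ and $H\in\mathbb{H}^2$, and then use the Carleson embedding for the measures $|g'(z)|^2(1-|z|^2)\,dA(z)$ and $|G'(z)|^2(1-|z|^2)\,dA(z)$ to absorb $1/H$ and $H$ into the respective invariant subspaces. The paper approximates $H$ directly by its Taylor polynomials rather than first by dilations, but the mechanism is identical.
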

\begin{proof}
It is well known that there exists $I \in \BMOA$ such that $[F]=[I']$ (see \cite[Theorem 3.3]{Bergman}). Corollary \ref{factorization BMOA} provides $G \in \QA$ and a function $H \in \mathbb{H}^2$ with $1/H \in \mathbb{H}^\infty$ such that $I'= G' H$. We now verify that  $[I']=[G']$.
Let $W \in [I']$. Given $\varepsilon >0$ there exists an analytic polynomial $P$ such that $\|W-P I' \|\leq \varepsilon.$ Consequently, if $H_n$ is the Taylor polynomial of $H$ of degree $n$, we have 
\begin{align*}
    \|W-P G'H_n\|&\leq \|W-PHG'\|+\|PHG'- PG'H_n\|\\
    &\leq \|W -P I'\|+ C \|G\|_{\BMO}^2 \|P\|_{\infty}\|H-H_n\|_{2},
\end{align*}
where $C>0$ is an absolute constant. The last estimate follows from the fact that $|G' (z)|^2 (1- |z|^2) dA(z)$ is a Carleson measure. Therefore $W \in [G']$. A similar argument using that $1/H \in \mathbb{H}^2$, proves the converse inclusion. 
\end{proof}

\medskip
\subsection{Generalized Volterra operators}
Given two analytic functions $F,G$ in $\mathbb D$, $T_G(F)$ denotes the generalized Volterra operator with symbol $G$ applied to $F$ defined as
\[
T_G(F)(z)=\int_0^zF(w)G'(w)dw, \quad z\in \mathbb{D}.
\]
It is clear that if $G \in \BMOA$, the operator $T_G  \colon \mathbb{H}^\infty \to \BMOA$ is continuous and $\|T_G \| \lesssim \|G \|_{\BMO}$. As it is expected, $T_G$ is compact precisely when the symbol $G \in \VMOA$. 

\begin{Lemm}\label{P:1}
Let $G \in \BMOA$. Then $T_G : \mathbb{H}^\infty \to \BMOA$ is compact if and only if $G \in \VMOA$.   
\end{Lemm}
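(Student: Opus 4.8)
The plan is to reduce everything to the description of the $\BMOA$ seminorm via Carleson measures recalled in the Introduction. Set $d\mu_G(z)=|G'(z)|^2(1-|z|^2)\,dA(z)$; this is a Carleson measure precisely because $G\in\BMOA$. Since $(T_GF)'=FG'$ and $T_GF(0)=0$, that characterization gives
\[
\|T_GF\|_{\BMOA}^2\asymp \sup_{I}\frac{1}{\ell(Q(I))}\int_{Q(I)}|F(z)|^2\,d\mu_G(z),
\]
so the problem becomes one about the Carleson box averages of the measures $|F|^2\,d\mu_G$. I would also use the standard reformulation of compactness: because the closed unit ball of $\mathbb{H}^\infty$ is a normal family, $T_G$ is compact if and only if $\|T_GF_n\|_{\BMOA}\to 0$ for every sequence $\{F_n\}$ with $\|F_n\|_\infty\le 1$ converging to $0$ uniformly on compact subsets of $\mathbb{D}$.

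For the direction $G\in\VMOA\Rightarrow$ compactness, I would take such a sequence $\{F_n\}$ and bound the box averages of $|F_n|^2\,d\mu_G$ uniformly in $Q$. Fix $\varepsilon>0$ and, using that $\mu_G$ is a vanishing Carleson measure, choose $\delta>0$ with $\mu_G(Q)/\ell(Q)<\varepsilon$ whenever $\ell(Q)<\delta$. For squares with $\ell(Q)<\delta$ the bound $|F_n|\le 1$ immediately yields an average $\le\varepsilon$. For a square $Q$ with $\ell(Q)\ge\delta$ I would split $Q$ into the part with $|z|\le 1-\delta/2$, where $F_n\to 0$ uniformly, and the boundary strip $|z|>1-\delta/2$, which is covered by $O(\ell(Q)/\delta)$ Carleson squares of side $\delta/2$ and hence carries $\mu_G$-mass $\lesssim\varepsilon\,\ell(Q)$. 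Combining the two contributions gives an average $\lesssim \varepsilon+K\sup_{|z|\le 1-\delta/2}|F_n(z)|^2$, where $K$ is the Carleson norm of $\mu_G$; letting $n\to\infty$ this is $\lesssim\varepsilon$. Thus $\|T_GF_n\|_{\BMOA}\to 0$ and $T_G$ is compact.

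For the converse I would test on the uniformly bounded family $f_a(z)=(1-|a|^2)/(1-\bar a z)$, $a\in\mathbb{D}$, which satisfies $\|f_a\|_\infty\le 2$ and tends to $0$ uniformly on compact sets as $|a|\to 1$. The key elementary computation is that $|1-\bar a z|\lesssim 1-|a|$ for every $z$ in the Carleson square $Q_a=Q(I_a)$ centered at $a/|a|$ with $\ell(Q_a)=1-|a|$, so that $|f_a|\gtrsim 1$ on the \emph{whole} box $Q_a$, not merely on its top part. Consequently
\[
\frac{\mu_G(Q_a)}{\ell(Q_a)}\lesssim \frac{1}{\ell(Q_a)}\int_{Q_a}|f_a|^2\,d\mu_G\lesssim \|T_Gf_a\|_{\BMOA}^2 .
\]
If $T_G$ is compact, the compactness criterion forces the right-hand side to tend to $0$ as $|a|\to 1$; since every sufficiently small Carleson square is of the form $Q_a$, this says exactly that $\mu_G$ is a vanishing Carleson measure, i.e. $G\in\VMOA$.

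The main obstacle is the sufficiency direction, where the estimate must be uniform over all Carleson squares simultaneously: the small squares are controlled by the vanishing condition, but the large squares require the splitting into a compact core (handled by the uniform convergence of $F_n$) and a thin boundary layer (handled by the Carleson property). The only other delicate point is the lower bound $|f_a|\gtrsim 1$ throughout $Q_a$, which is what allows the test functions to detect the full box average rather than only the tent average and thereby recover the genuine vanishing Carleson condition.
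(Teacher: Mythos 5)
Your proposal is correct, but it proves both directions by a genuinely different route than the paper. For sufficiency, the paper does not estimate Carleson boxes at all: it first shows $T_P$ is compact when $P$ is a polynomial by factoring $T_P=V\circ M_{P'}$ and citing the compactness of the classical Volterra operator $V:\mathbb{H}^\infty\to\BMOA$, and then approximates a general $G\in\VMOA$ by polynomials in the $\BMO$ norm, using $\|T_G-T_{P_n}\|\lesssim\|G-P_n\|_{\BMO}$ and the fact that norm limits of compact operators are compact. Your direct argument --- splitting each box into a compact core, where $F_n\to 0$ uniformly, and a thin boundary layer covered by $O(\ell(Q)/\delta)$ boxes of side $\delta/2$ each carrying small $\mu_G$-mass --- is self-contained and avoids the external compactness result for $V$, at the cost of invoking (and, in a complete write-up, proving) the normal-families reformulation of compactness; the paper's route is shorter given the cited ingredients. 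For necessity, the paper tests on the monomials $z^{N_n}$ with $N_n\sim |I_n|^{-1}$, for which $|z|^{2N_n}\geq 1/4$ on $Q(I_n)$ makes the lower bound immediate, whereas you test on the kernels $f_a$ and need the (correct) observation that $|1-\bar a z|\lesssim 1-|a|$ on all of $Q_a$, so that $|f_a|\gtrsim 1$ on the whole box. Both families are uniformly bounded and tend to $0$ locally uniformly, so both arguments rest on the same compactness criterion; your version recovers the vanishing Carleson condition for squares centered at arbitrary boundary points directly, while the paper's contradiction argument does the same with a slightly more explicit computation. No gaps.
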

\begin{proof}
Assume first that $G$ is a polynomial. Note that $T_g=V\circ M_{G'}$, where $M_{G'}\colon \mathbb{H}^{\infty}\to \mathbb{H}^{\infty}$ and $V\colon \mathbb{H}^{\infty}\to \BMOA$ are respectively the operator of multiplication by $G'$ and the classical Volterra operator. By \cite[Theorem 3.5]{Anderson2014}, 
 $V$ acts compactly on $\mathbb{H}^{\infty}$. Hence $T_G$ is compact.  

Consider now an arbitrarily function $G \in \text{VMOA}$. Note that there exist polynomials $P_n$, such that
$$
\lim_{n \to \infty }\|G-P_n\|_{\BMO}=0\, .
$$
Thus 
\begin{align*}
   \lim_{n \to \infty} \|T_G-T_{P_n}\| \lesssim \lim_{n \to \infty}\|G-P_n\|_{\BMO}=0\, .
\end{align*}
Hence, $T_G\colon \mathbb{H}^{\infty}\to \BMOA$ is compact.

The converse is proved by contradiction. Assume that $T_G$ is compact and that $G$ is not in VMOA. Then there exist a constant $M >0$ and a sequence of arcs $\{I_n \}$ in $ \partial \mathbb{D}$ such that $|I_n|\to 0$ and 
\begin{equation}\label{Eq contradicts}
     \frac{1}{|I_n|}\int_{Q(I_n)}|G'(z)|^2(1-|z|^2)dA(z)>M, \quad  n=1,2,\ldots .
\end{equation}
For $n=1,2,\ldots$ pick the integer $N_n$ with $|I_n|^{-1} \leq N_n < |I_n|^{-1} + 1$. Then 
\[
\| T_G (z^{N_n}) \|_{\BMO}^2 \geq \frac{1}{|I_n|}\int_{Q(I_n)}  |z|^{2N_n}  |G'(z)|^2(1-|z|^2)dA(z) \geq M/4. 
\]
Hence $T_G : \mathbb{H}^\infty \rightarrow \BMOA$ is not compact. 
\end{proof}

The boundedness from below of the operators $T_G$ acting on Hardy and Bergman spaces has already  been studied in \cite{Anderson2011} and \cite{Panteris2022}.
The core of our approach lies in establishing that for $G \in \BMOA$, there exists a non zero function $F \in \VMOA \cap T_G(\mathbb{H}^\infty)$.

\begin{proof}[Proof of Theorem \ref{Theorem 4}]
Let $H \in \mathbb{H}^\infty$. Because of Theorem \ref{Theorem 1}, we can find an outer function $E \in \QA$ such that $|E(z) G'(z)H(z)|^2(1-|z|^2)dA(z)$ is a vanishing Carleson measure. This implies that $F = T_G (EH) \in \VMOA$. Note that 
\begin{equation}
    \label{formula}
    E H G' = F' . 
\end{equation}
We argue by contradiction. Assume that $T_G: \mathbb{H}^\infty \to \BMOA$ is bounded from below or that $T_G (\mathbb{H}^\infty )$ is closed in $BMOA$. Then, since $T_G$ is also bounded, there exists a constant $C>0$ such that 
\begin{equation}
C^{-1} \|k\|_{\infty} \leq \|T_G (k)\|_{\BMO} \leq C \|k\|_{\infty}    
\end{equation}
for every $k \in \mathbb{H}^\infty$. In particular, if $k_n(z)=E(z)H(z)z^n$, $z \in \mathbb{D}$, applying \eqref{formula} we deduce that
\begin{align*}
    \|T_G(k_n)\|_{\BMO}^2&\sim \sup_{I}\frac{1}{|I|}\int_{Q(I)}|G'(z)E(z)H(z)|^2|z|^{2n}(1-|z|^2)\, dA(z)\\
    &=\sup_{I}\frac{1}{|I|}\int_{Q(I)}|F'(z)|^2|z|^{2n}(1-|z|^2)\, dA(z) . 
\end{align*}
Since $F \in \VMOA$ we deduce that $\|T_G (k_n)\|_{\BMO} \to 0$ as $n \to \infty$ while $\|k_n \|_\infty$ is bounded below.

\end{proof}    
\medskip

\section{Sharpness of Theorem \ref{Theorem 1}}
We first show that in the conclusion of Theorem \ref{Theorem 1}, one can not improve the vanishing Carleson measure condition.

\begin{prop}\label{Prop:optimality vanishing carleson}
 For any increasing function $\omega\colon [0,1]\to [0,+\infty)$ with $\omega(0)=0$, there exists a Carleson measure $\mu$ such that for any outer function $E \in \mathbb{H}^\infty$ we have 
 \[
 \limsup_{\ell(Q)\to 0}\frac{\int_Q|E(z)|d\mu(z)}{\ell(Q)\omega(\ell(Q))}=+\infty.
 \]
\end{prop}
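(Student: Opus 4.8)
The plan is to construct a single dyadic point-mass measure $\mu$, depending only on $\omega$, and to show that for every outer $E\in\mathbb{H}^\infty$ the \emph{averaged} mass of $|E|$ over the Carleson squares of a fixed small scale is already too large, so that a pigeonhole argument produces the desired squares. Let $\mathcal D_n$ be the dyadic arcs of $\partial\mathbb D$ of length $2^{-n}$ and, for $I\in\mathcal D_n$, let $z_I=z_{Q(I)}$ be the point associated to $Q(I)$, so $1-|z_I|=2^{-n}$. I would set
\[
\mu=\sum_{n\geq 0}\sum_{I\in\mathcal D_n}\kappa_n\,2^{-n}\,\delta_{z_I},
\]
for a nonnegative summable sequence $\kappa_n$ to be fixed. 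A direct count gives $\mu(Q)=\ell(Q)\sum_{n\geq m}\kappa_n$ for a dyadic square $Q$ with $\ell(Q)=2^{-m}$, so $\mu$ is a (vanishing) Carleson measure whenever $\sum_n\kappa_n<\infty$. Writing $\Gamma_m=\sum_{n\geq m}\kappa_n$, the choice of $\kappa_n$ amounts to prescribing any non-increasing tail $\Gamma_m\downarrow 0$. After reducing to the case $\lim_{t\to 0^+}\omega(t)=0$ (which is necessary, since otherwise $\int_Q|E|\,d\mu\leq\|E\|_\infty\mu(Q)\lesssim\ell(Q)\omega(\ell(Q))$ forces a finite $\limsup$), I would take $\Gamma_m=\sqrt{\omega(2^{-m})}$, so that $\Gamma_m\to 0$ while $\Gamma_m/\omega(2^{-m})=\omega(2^{-m})^{-1/2}\to\infty$.

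Fix an outer $E\in\mathbb H^\infty$; normalising, assume $\|E\|_\infty\leq 1$, so $\log|E|\leq 0$ on $\partial\mathbb D$ and, crucially by outerness, $\log|E(z)|=\int_{\partial\mathbb D}P_z(\xi)\log|E(\xi)|\,dm(\xi)$. The core step is a lower bound, \emph{uniform in the scale} $n$, for the average of $|E|$ sampled at the points $z_I$. Put $g_n(\xi)=|E(z_I)|$ for $\xi\in I\in\mathcal D_n$, so $\int_{\partial\mathbb D}g_n\,dm=2^{-n}\sum_{I\in\mathcal D_n}|E(z_I)|$. Jensen's inequality gives $\int g_n\,dm\geq\exp\big(\int\log g_n\,dm\big)$, and interchanging the order of integration,
\[
\int_{\partial\mathbb D}\log g_n\,dm=\int_{\partial\mathbb D}\log|E(\eta)|\,K_n(\eta)\,dm(\eta),\qquad K_n=\sum_{I\in\mathcal D_n}2^{-n}\,P_{z_I}.
\]
A standard estimate on sums of Poisson kernels centred at $2^{-n}$-separated points gives $K_n\leq C$ for a universal $C$, while $\int_{\partial\mathbb D}K_n\,dm=1$. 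Since $\log|E|\leq 0$, this yields $\int\log g_n\,dm\geq C\int\log|E|\,dm=C\log|E(0)|$, hence $\int_{\partial\mathbb D}g_n\,dm\geq|E(0)|^{C}=:c_0>0$ for every $n$ (here $|E(0)|>0$ because outer functions are zero-free).

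I would then average over the scale $m$. Because the squares $\{Q(I):I\in\mathcal D_m\}$ partition $\{z:1-|z|\leq 2^{-m}\}$, and the point $z_I$ with $I\in\mathcal D_n$ lies in this region exactly when $n\geq m$,
\[
\sum_{I\in\mathcal D_m}\int_{Q(I)}|E|\,d\mu=\int_{\{1-|z|\leq 2^{-m}\}}|E|\,d\mu=\sum_{n\geq m}\kappa_n\int_{\partial\mathbb D}g_n\,dm\geq c_0\,\Gamma_m.
\]
As $\mathcal D_m$ has $2^m$ elements, some $I^\ast\in\mathcal D_m$ satisfies $\int_{Q(I^\ast)}|E|\,d\mu\geq c_0\Gamma_m 2^{-m}$, and with $\ell(Q(I^\ast))=2^{-m}$ this gives
\[
\frac{\int_{Q(I^\ast)}|E|\,d\mu}{\ell(Q(I^\ast))\,\omega(\ell(Q(I^\ast)))}\geq\frac{c_0\Gamma_m}{\omega(2^{-m})}=\frac{c_0}{\sqrt{\omega(2^{-m})}},
\]
which tends to $\infty$ as $m\to\infty$; since $\ell(Q(I^\ast))\to 0$ this forces the $\limsup$ to be $+\infty$. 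The main obstacle is exactly the uniform-in-$n$ lower bound $\int g_n\,dm\geq c_0$: a single test square cannot be controlled, because $|E|$ may be arbitrarily small on the part of its Poisson mass that leaks just outside it, where $\log|E|$ can be extremely negative. Averaging over all $2^m$ squares of a scale is what tames this leakage, since globally only $\int K_n\,dm=1$ and the uniform bound $K_n\leq C$ enter, and these are insensitive to where $E$ happens to be small.
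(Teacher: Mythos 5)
Your proof is correct, and it takes a genuinely different route from the paper's. The paper builds a sparser measure: for sequences $h_k,\delta_k$ with $\sum h_k/\delta_k<\infty$ and $(h_k/\delta_k)\log\omega(h_k)\to-\infty$, it places $2\pi/\delta_k$ uniformly spread atoms of mass $h_k$ on the circle $\{z:1-|z|=h_k\}$ and argues by contradiction: a bounded ratio would force $|E|\le\omega(h_k)$ at every atom of generation $k$, Harnack's inequality spreads this smallness over discs covering a proportion $\approx h_k/\delta_k$ of the circle of radius $1-h_k$, and the sub-mean-value property of $\log|E|$ at the origin yields $\log|E(0)|\lesssim(h_k/\delta_k)\log\omega(h_k)\to-\infty$, impossible for a non-trivial outer function. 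You replace this local ``small at many points'' contradiction by a direct global inequality at each scale: Jensen's inequality plus the Poisson representation of $\log|E|$ (this is where outerness enters in both arguments) plus the bounded-overlap bound $\sum_{I\in\mathcal D_n}2^{-n}P_{z_I}\le C$ give the uniform estimate $2^{-n}\sum_{I\in\mathcal D_n}|E(z_I)|\ge|E(0)|^{C}$, and pigeonholing over the $2^m$ dyadic squares of a given scale exhibits a square where the ratio is at least $|E(0)|^{C}/\sqrt{\omega(2^{-m})}$. Your version is direct rather than by contradiction, uses a canonical dyadic lattice measure whose scale profile $\Gamma_m$ is prescribed freely (so $\mu$ is even a vanishing Carleson measure), and gives an explicit blow-up rate; the paper's two free parameters $(h_k,\delta_k)$ achieve the same end with a thinner measure. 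Two housekeeping remarks. Your reduction to $\lim_{t\to0^+}\omega(t)=0$ is correct and is equally an implicit hypothesis of the paper's proof (which needs $\log\omega(h_k)\to-\infty$); as literally stated the proposition fails for increasing $\omega$ with a jump at $0$, since $E\equiv1$ then gives a finite $\limsup$. Also, your final display divides by $\omega(2^{-m})$, and in the degenerate case where $\omega$ vanishes on some interval $(0,\epsilon]$ your $\Gamma_m$ vanishes too and the quotient reads $0/0$; there one should simply observe that the conclusion is immediate because the denominator $\ell(Q)\omega(\ell(Q))$ is zero while the numerator is positive.
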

\begin{proof}
Pick two sequences $\{\delta_k\}$ and $\{h_k\}$ of positive numbers such that
\begin{equation}\label{E:conditiondeltah}
  \sum_{k=1}^\infty \frac{h_k}{\delta_k}<\infty  \quad \text{ and }\quad  \lim_{k \to \infty}\frac{h_k}{\delta_k} \log(\omega(h_k))=-\infty.
\end{equation}
We can also assume that $N_k = 2 \pi / \delta_k$ is an integer for any $k=1,2,\ldots$. For any $k=1,2,\ldots$, let $\Lambda_k =\{ z_{k,j} : j=1,\ldots , N_k \}$ be points uniformly distributed in the circle $\{z \in \mathbb{D} : 1-|z| = h_k\}$. 
Then the sequence $\{z_n\}$ defined as 
\[
\{z_n\}=\bigcup_{k=1}^\infty \Lambda_k
\]
is a Blaschke sequence.
Actually 
\[
\mu=\sum_{n}(1-|z_n|)\delta_{z_n},
\]
is a Carleson measure. 
We argue by contradiction and assume that there exists an outer function $E \in \mathbb{H}^\infty$, $\|E\|_\infty \leq 1$, such that
\begin{equation}\label{eqA}
    \frac{\int_Q|E(z)|d\mu(z)}{\ell(Q)\omega(\ell(Q))}<1
\end{equation}
for every Carleson square $Q$. We pick Carleson squares $Q_{k,j}$ with $\ell(Q_{k.j})=h_k$ such that $z_{k,j}$ lies in the top part $T(Q_{k.j})$ of $Q_{k.j}$. 
The assumption \eqref{eqA} implies that for every $k,j$
\begin{align*}
|E(z_{k,j})|h_k\leq & 
\ell(Q_{k,j})\omega(\ell(Q_{k,j}))=h_k\omega(h_k),
\end{align*}
that is, 
\begin{equation}\label{eqB}
|E(z_{k,j})|\leq \omega(h_k), \quad j=1, \ldots , N_k ; \, k=1,2,\ldots .
\end{equation}
Consider the discs $D_{k,j} = \{z \in \mathbb{D} : |z - z_{k,j}| \leq (1-|z_{k,j}|) / 2 \}$. Harnack's inequality applied to the positive harmonic function $- \log |E|$ gives that
\[
-\log|E(z)| \gtrsim -\log |E(z_{k,j})| \geq -\log\omega(h_k), \quad z \in D_{k,j}, \quad j=1, \ldots , N_k ; \,  k=1,2,\ldots .
\]
uniformly in $k,j$.
Consequently, by subharmonicity, we have that
\begin{align*}
\log|E(0)|&\leq \int_{\partial \mathbb{D}}\log|E((1- h_k) \xi)|dm (\xi)
\lesssim \log(\omega(h_k)) \frac{h_k}{\delta_k} \to -\infty , \, \text{as} \, k \to \infty.
\end{align*}
This is clearly a contradiction and finishes the proof. 
\end{proof}


We recall that the disc algebra $\A$ is the space of continuous functions in the closed unit disc which are analytic in $\mathbb{D}$. Our next result says that in Theorem \ref{Theorem 1} one  can not replace the condition $E \in \QA$ by $E \in \A$.
\begin{prop}\label{Optimality not E in A}
There exists a Carleson measure  $\mu$ in $\mathbb D$ such that there are no non trivial functions $E \in \A$ such that $|E|\mu$ is a vanishing Carleson measure. 
\end{prop}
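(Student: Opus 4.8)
The plan is to reduce the statement to Lemma~\ref{Lemma 2.4} and then to produce a Carleson measure whose ``heavy'' squares have base points dense in $\partial\mathbb D$. For $\varepsilon>0$ let $\mathcal A(\varepsilon)$ be the family of Carleson squares with $\mu(Q)\ge\varepsilon\ell(Q)$, as in Lemma~\ref{Lemma 2.4}. Since $\A\subset\QA\subset\VMOA$, that lemma applies to any $E\in\A$: if $|E|\mu$ were a vanishing Carleson measure, then $|E(z_Q)|\to0$ as $Q$ ranges over $\mathcal A(\varepsilon)$ with $\ell(Q)\to0$, where $z_Q=(1-\ell(Q))\xi$ and $\xi$ is the centre of the base arc of $Q$. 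So it is enough to construct a Carleson measure $\mu$ for which (say with $\varepsilon=1$) there exist squares in $\mathcal A(1)$ of arbitrarily small size whose base centres approach every boundary point.

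Granting such a $\mu$, the proof concludes quickly. Suppose $E\in\A$ is non trivial and $|E|\mu$ is a vanishing Carleson measure. Fix $\eta\in\partial\mathbb D$ and pick squares $Q_k\in\mathcal A(1)$ with $\ell(Q_k)\to0$ whose base centres $\xi_k$ tend to $\eta$; then $z_{Q_k}=(1-\ell(Q_k))\xi_k\to\eta$, and Lemma~\ref{Lemma 2.4} yields $|E(z_{Q_k})|\to0$. As $E$ is continuous on $\overline{\mathbb D}$, this forces $E(\eta)=\lim_k E(z_{Q_k})=0$. Since $\eta$ is arbitrary, $E$ vanishes identically on $\partial\mathbb D$, hence $E\equiv0$, a contradiction. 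The point worth emphasising, and where I expect the main conceptual obstacle to lie, is the choice of what ``enough heavy squares'' should mean. A first instinct is to arrange the heavy squares so that they accumulate on a set of positive measure, thereby forcing $E$ to vanish on a positive-measure set; but this is impossible, because a Borel--Cantelli argument shows that no Carleson measure can be heavy at infinitely many scales over a set of positive measure (this is also the content of the ``most squares are thin'' heuristic of Proposition~\ref{L:prel}). The resolution is that continuity of $E\in\A$ together with Lemma~\ref{Lemma 2.4} only requires the base points to be \emph{dense}, and a dense (necessarily measure-zero) set of base points is perfectly compatible with a sparse Carleson packing.

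It remains to build $\mu$. Choose a sequence $\{\xi_i\}$ in $\partial\mathbb D$ every tail of which, $\{\xi_i:i>N\}$, is dense in $\partial\mathbb D$ (for instance an enumeration of a countable dense set in which each point is repeated infinitely often). Put $\ell_i=100^{-i}$, $z_i=(1-\ell_i)\xi_i$, and define
\[
\mu=\sum_{i=1}^\infty \ell_i\,\delta_{z_i}.
\]
Let $Q_i$ be the Carleson square centred at $\xi_i$ with $\ell(Q_i)=\ell_i$; then $z_i\in Q_i$, so $\mu(Q_i)\ge\ell_i=\ell(Q_i)$ and $Q_i\in\mathcal A(1)$. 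Because $\ell_i\to0$ and the tails of $\{\xi_i\}$ are dense, these squares provide small heavy squares with base centres approaching every $\eta\in\partial\mathbb D$, exactly as required above. Finally $\mu$ is a Carleson measure: for an arc $I$, if $z_i\in Q(I)$ then $1-\ell_i\ge1-\ell(I)$, i.e. $\ell_i\le\ell(I)$; letting $i_0$ be the least index with $\ell_{i_0}\le\ell(I)$, the geometric decay of the $\ell_i$ gives
\[
\mu(Q(I))\le\sum_{i\ge i_0}\ell_i\le\frac{100}{99}\,\ell_{i_0}\le 2\,\ell(I),
\]
which completes the construction and the proof.
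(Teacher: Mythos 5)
Your proposal is correct and follows essentially the same strategy as the paper: a Carleson measure built from point masses with weights $1-|z|$ whose heavy squares have base points dense in $\partial\mathbb D$, so that continuity of $E$ on $\overline{\mathbb D}$ forces $E\equiv 0$. The only cosmetic difference is that you route the step ``heavy square with vanishing Carleson measure forces $|E(z_Q)|$ small'' through Lemma~\ref{Lemma 2.4}, whereas the paper gets it directly from $\int_{Q_n(\xi)}|E|\,d\mu \le o(1)\,\ell(Q_n(\xi))$ together with $\mu(Q_n(\xi))>\ell(Q_n(\xi))$ and the continuity of $E$ near $\xi$; both routes are valid since $\A\subset\VMOA$.
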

\begin{proof}
Consider a Carleson measure $\mu$ such that for any point  $\xi \in \partial  \mathbb D $ there exists a sequence of Carleson squares  $\{Q_n(\xi)\}$ such that $\mu(Q_n(\xi)) > \ell(Q_n(\xi))$ and  
\[
\lim_{n \to \infty}\ell(Q_n(\xi))=0\, , \lim_{n \to \infty}\text{dist}(\xi, Q_n(\xi))=0.
\]
For instance one could consider a uniformly separated sequence $\Lambda$ with $\partial \mathbb{D} \subset \overline{\Lambda}$ and 
$$
\mu = \sum_{z \in \Lambda} (1-|z|) \delta_z . 
$$
Assume that there exists a function $E \in \A$ such that $|E|\mu$ is a vanishing Carleson measure. Then
\[
\lim_{n \to \infty}\frac{\int_{Q_n(\xi)}|E|d\mu}{\ell(Q_n(\xi))}=0, \quad \xi \in \partial \mathbb{D} . 
\]
Since $E$ is continuous in $\overline{\mathbb{D}}$, this implies that $E$ vanishes identically. 
\end{proof}

Our last remark concerns the sharpness of part (b) of Theorem \ref{Theorem 1}: one can not replace the condition $\log |E| \in \BMO$ by the stronger one $\log |E| \in \VMO$. 
\begin{prop}\label{Missing optimal}
There exists a finite positive Borel measure $\mu$ in $\mathbb{D}$ such that there are no function $E \in \QA$ with $\log|E|\in \VMO$ such that $|E| \mu$ is a vanishing Carleson measure.
\end{prop}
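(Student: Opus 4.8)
The plan is to construct $\mu$ so heavy, at arbitrarily small scales and along a sufficiently spread set, that any admissible $E$ would have to be too small on too large a portion of $\mathbb D$. The engine is the elementary growth bound for functions with $\log|E|\in\VMO$ together with a lower bound for $-\log|E|$ coming from heaviness and the vanishing Carleson condition.

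First I would record the two competing estimates. On the one hand, if $g:=-\log|E_o|$ is the modulus of an outer function with $g\in\VMO$ and $g\ge0$, then writing $I(z)$ for the arc of length $1-|z|$ under $z$, a telescoping of the averages $g_{I}$ controlled by the vanishing $\VMO$ modulus $\omega_g(t)\to0$, combined with the standard Poisson estimate $|g(z)-g_{I(z)}|\lesssim\|g\|_{\BMO}$, yields the sublogarithmic growth
$$-\log|E_o(z)|=o\Big(\log\frac{1}{1-|z|}\Big),\qquad |z|\to1.$$
On the other hand, I would choose $\mu$ to contain, at each scale $h=2^{-k}$, Carleson squares $Q$ with $\mu(Q)\ge \ell(Q)^{-c}\ell(Q)$ for a fixed $c>0$, whose mass is spread (so that $E$ cannot cheaply vanish on the atoms). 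The vanishing Carleson condition for $|E|\mu$ then forces, on a definite $\mu$-portion of each such $Q$, the polynomial lower bound $-\log|E(w)|\ge c\log\frac{1}{1-|w|}-o\big(\log\frac{1}{1-|w|}\big)$ at interior points $w$ of height $\sim h$.

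Next I would combine these. For the outer part the two displays are contradictory, since $c\log\frac1h$ cannot be $o(\log\frac1h)$; thus an \emph{outer} killer with $\log|E|\in\VMO$ is impossible as soon as the heavy squares persist at all small scales. To upgrade this to arbitrary $E\in\QA$, I factor $E=\Theta E_o$ with $\Theta$ inner and $E_o$ outer, note that $\log|E|=\log|E_o|\in\VMO$ on $\partial\mathbb D$, and observe that the polynomial smallness must then be carried by the inner factor: $-\log|\Theta(w)|\gtrsim\log\frac{1}{1-|w|}$ on the heavy part of $Q$. The point is that $\Theta$ cannot achieve this on a spread family of squares at all scales while keeping $E=\Theta E_o\in\VMO$: integrating $-\log|\Theta((1-h)\xi)|$ over $\partial\mathbb D$ gives a quantity tending to $0$ as $h\to0$ (the boundary modulus of $\Theta$ being $1$), so the heavy set at scale $h$ must shrink, and one designs $\mu$ via a stopping-time scheme (in the spirit of Lemma \ref{Lemma 2.2}) so that this forced shrinking is incompatible with the persistence of the heaviness. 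Feeding this back shows that no $E\in\QA$ with $\log|E|\in\VMO$ can make $|E|\mu$ a vanishing Carleson measure.

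The main obstacle is precisely the reduction to the inner factor: a measure concentrated near a single boundary point (or near any set of measure zero carrying a Carleson singular measure) \emph{can} be defeated, by taking $E=S\,E_o$ with $S$ a singular inner function and $E_o$ an outer function vanishing on the support of $S$, so that $E\in \A\subset\QA$ and $\log|E|=\log|E_o|\in\VMO$ while $|E|$ decays faster than any power near that set. Consequently the construction of $\mu$ must arrange the heavy squares along a set that is simultaneously large enough that no admissible inner factor can be small on it at every scale, yet thin enough that $\mu$ remains a finite measure; balancing these two requirements, and quantifying the inner-factor obstruction through the decay of $\int_{\partial\mathbb D}\big(-\log|\Theta((1-h)\xi)|\big)\,dm(\xi)$, is the technical heart of the proof.
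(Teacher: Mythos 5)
Your proposal has a genuine gap: the construction of $\mu$ is never carried out, and the one quantitative tool you offer for the decisive step (the inner factor) is incorrect. You write that $\int_{\partial\mathbb D}\bigl(-\log|\Theta((1-h)\xi)|\bigr)\,dm(\xi)\to 0$ as $h\to 0$ because the boundary modulus of $\Theta$ is $1$; this holds for Blaschke products but fails for a singular inner function $S_\nu$, for which $-\log|S_\nu(z)|=\int P_z\,d\nu$ and the integral over the circle $|z|=1-h$ equals $\nu(\partial\mathbb D)$ for every $h$. Worse, even the bounded version of this averaging argument cannot close the proof: if $\mu$ is finite and has $N_k$ heavy squares with $\mu(Q)\geq \ell(Q)^{1-c}$ at scale $h_k$, then $N_k h_k^{1-c}\leq\mu(\mathbb D)$, so the shadows of the heavy squares cover at most $O(h_k^{c})$ of the circle, while forcing $-\log|\Theta|\geq c\log(1/h_k)$ on their tops only requires the superharmonic mean of $-\log|\Theta|$ on $|z|=1-h_k$ to exceed $\approx N_k h_k\cdot c\log(1/h_k)=O(h_k^{c}\log(1/h_k))\to 0$. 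So the finiteness of $\mu$ is exactly what prevents you from spreading the heavy squares widely enough for the averaging obstruction to bite, and defeating the inner factor would require a genuinely new idea (presumably exploiting $E\in\VMOA$ itself), which the sketch does not supply. The outer-function half of your argument (the $o(\log\frac1{1-|z|})$ growth of $-\log|E_o|$ against the polynomial heaviness) is fine, but it is not the hard part, as you yourself observe.

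For contrast, the paper avoids all of this with a short soft argument: take $G\in\mathbb H^2$ with $G\notin\mathbb H^p$ for any $p>2$ and set $\mu=|G'(z)|^2(1-|z|^2)\,dA(z)$. If $E\in\QA$ with $\log|E|\in\VMO$ made $|E|\mu$ a vanishing Carleson measure, then $F=T_G(E)$ would lie in $\VMOA$ with $F'=EG'$, while John--Nirenberg gives $1/E\in\mathbb H^p$ for all $p<\infty$; Cohn's factorization theorem then shows $G'=F'\cdot(1/E)$ is the derivative of an $\mathbb H^p$ function for some $p>2$, so $G\in\mathbb H^p$, a contradiction. This sidesteps both the construction of a bespoke measure and the inner/outer dichotomy entirely. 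If you want to salvage a constructive proof you would need to address the singular inner factor head-on; as written, the proposal identifies the obstacle but does not overcome it.
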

\begin{proof}
We argue by contradiction. Let $G \in \mathbb{H}^2$ and consider the measure $\mu(z) = |G'(z)|^2 (1- |z|^2) dA(z)$. Assume there exists an outer function $E \in \QA$ such that $|E| \mu$ is a vanishing Carleson measure and $\log|E| \in \VMO$. Then the function $F$ defined by
\[
F(z) = \int_0^z E(w) G' (w) dw, \quad z \in \mathbb{D}
 \]
belongs to $\VMOA$ and satisfies $F' = G' E$. 
Since $\log |E| \in VMO$, we have $1/E \in \mathbb{H}^p$ for $p>2$. hence we obtain that for any function $G \in \mathbb{H}^2$ one can factor $G'= F' / E \in \{H' : H \in \mathbb{H}^p\} $ by the result of W.Cohn \cite{Cohn}, 
which is clearly a contradiction if $p>2$.
\end{proof}

As explained in the introduction, the proof of Theorem \ref{Theorem 1} relies on the fact that for any finite positive Borel measure $\mu$, the ratio $\mu (Q) / \ell (Q)$ is small for {\it most} Carleson squares $Q \subset \mathbb{D}$. Our last result points in this direction.

\begin{prop}\label{L:prel}
Let $\mu$ be a finite positive measure. Then 
\[
\lim_{h\to 0 }\frac{\mu\left( Q(\xi,h)\right)}{h}=0\, ,
\]
for $m$-almost every $\xi \in \partial \mathbb D$. Here $Q(\xi,h) = Q(I(\xi, h))$ where $I(\xi, h)$ is the arc of $\partial \mathbb{D}$ centered at $\xi$ of normalized length $h$.
\end{prop}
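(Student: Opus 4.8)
The plan is to combine a truncation near the boundary with the weak-type $(1,1)$ inequality for the Hardy--Littlewood maximal function. The first observation is that $\mu(Q(\xi,h))/h$ only feels the mass very close to $\partial\mathbb D$: every $z\in Q(\xi,h)$ satisfies $1-|z|\le\ell(Q(\xi,h))=h$, so for $h<\delta$ one has $Q(\xi,h)\subset\{z\in\mathbb D:1-|z|<\delta\}$. Hence, setting $\mu^{(\delta)}=\mu|_{\{1-|z|<\delta\}}$, we have $\mu(Q(\xi,h))=\mu^{(\delta)}(Q(\xi,h))$ for all $h<\delta$, so that $\limsup_{h\to0}\mu(Q(\xi,h))/h$ is unchanged if $\mu$ is replaced by $\mu^{(\delta)}$, for every $\delta>0$. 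The gain from this reduction is that, since $\mu$ is finite on the \emph{open} disc, continuity from above yields $\mu_\delta:=\mu(\{z\in\mathbb D:1-|z|<\delta\})\to0$ as $\delta\to0$.

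Next I would push the truncated mass to the circle. Let $\pi(z)=z/|z|$ be the radial projection and $\nu^{(\delta)}=\pi_*\mu^{(\delta)}$, a finite positive measure on $\partial\mathbb D$ with $\nu^{(\delta)}(\partial\mathbb D)=\mu_\delta$. Since $Q(\xi,h)\subset\pi^{-1}(I(\xi,h))$ and $|I(\xi,h)|=h$, for $h<\delta$ we obtain
\[
\frac{\mu(Q(\xi,h))}{h}\le\frac{\nu^{(\delta)}(I(\xi,h))}{|I(\xi,h)|}\le M\nu^{(\delta)}(\xi),
\]
where $M\nu^{(\delta)}$ denotes the centered Hardy--Littlewood maximal function of $\nu^{(\delta)}$ on the circle.

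Fixing $\lambda>0$, this pointwise bound shows that $E_\lambda:=\{\xi\in\partial\mathbb D:\limsup_{h\to0}\mu(Q(\xi,h))/h>\lambda\}$ is contained in $\{M\nu^{(\delta)}>\lambda\}$ for \emph{every} $\delta>0$, since a witnessing sequence $h_j\to0$ eventually satisfies $h_j<\delta$. The weak-type $(1,1)$ maximal inequality then gives $m(E_\lambda)\le m(\{M\nu^{(\delta)}>\lambda\})\le C\mu_\delta/\lambda$, and letting $\delta\to0$ forces $m(E_\lambda)=0$ for each $\lambda>0$. Taking the union over $\lambda=1/k$, $k\in\mathbb N$, yields the proposition.

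The one conceptually delicate point is that the bare estimate $\mu(Q(\xi,h))\le\nu^{(\delta)}(I(\xi,h))$ only controls the $\limsup$ by the absolutely continuous density of $\nu^{(\delta)}$, hence gives a \emph{finite} a.e.\ limit rather than $0$. The vanishing is recovered only by keeping $\delta$ as a free parameter and sending it to $0$ \emph{after} applying the maximal inequality, so that the exceptional set is majorized by the total mass $\mu_\delta\to0$; correctly ordering these two limits is the crux of the argument. A routine check along the way is that $\pi$ is well defined on $\operatorname{supp}\mu^{(\delta)}$, which holds for $\delta<1$ since that set omits the origin.
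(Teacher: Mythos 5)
Your proof is correct and is essentially the same argument as the paper's: both reduce the claim to the estimate $m\{\xi : \limsup_h \mu(Q(\xi,h))/h > \lambda\} \lesssim \mu(\{z : 1-|z| < \delta\})/\lambda$ for every $\delta>0$, and then let $\delta \to 0$ using that $\mu$ is finite on the open disc. The only difference is cosmetic: you push the truncated mass to the circle and invoke the weak-type $(1,1)$ maximal inequality as a black box, whereas the paper runs the underlying Vitali covering argument by hand on a compact subset of the bad set.
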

\begin{proof}
We proceed by contradiction, that is, we assume that  
\begin{equation}
    m\left\lbrace \xi \in \partial \mathbb{D} : \limsup_{h\to 0 }\frac{\mu\left( Q(\xi,h)\right)}{h}>0\right\rbrace > 0\, .
\end{equation}
By the regularity of the Lebesgue measure, there exists a constant $\eta >0$ such that
\begin{equation*}
    m \left\lbrace \xi \in \partial \mathbb{D} : \limsup_{h\to 0 }\frac{\mu\left( Q(\xi,h)\right)}{h}>\eta\right\rbrace > 0\, .
\end{equation*}
Let $\varepsilon >0$ be a small number to be fixed later. Consider a compact set $K\subset \partial \mathbb D$  wih $m(K) >0$ such that for every $\xi \in K$ there exists a sequence $\{h_n(\xi)\}_n$ tending to $0$ as $n\to \infty$ with $0< h_n (x) < \varepsilon$ and 
$$
\mu(Q(\xi, h_n(\xi)))>\dfrac{\eta}{2}\ h_n(\xi), \quad n=1,2, \ldots .
$$ 
We consider the collection of arcs $\{I(\xi, h_n (\xi)) : n=1,2,\ldots ; \xi \in K\}$. Using Vitali's covering lemma, we extract a family of pairwise disjoint arcs $\{I_k \}$ such that $K\subseteq \bigcup_k 5 I_k $.
Note that $\mu (Q(I_k)) \geq \eta |I_k| / 2$. Consequently
\begin{align*}
   \mu\left(\left\lbrace z \in \mathbb D : 1-\varepsilon\leq |z|<1\right\rbrace\right)&\geq \mu\left(\cup_k Q(I_k) \right)
   \geq \frac{\eta}{2} \sum_k |I_k| \geq \frac{\eta}{10} |K|\, .
\end{align*}
However, the last inequality gives a contradiction if $\varepsilon >0$ is taken sufficiently small since 
$$
\lim_{\varepsilon \to 0}\mu\left(\left\lbrace z \in \mathbb D : 1-\varepsilon\leq |z|<1\right\rbrace\right)=0\, .
$$
\end{proof}




\medskip

\section{Acknowledgements}
The first author is member of Gruppo Nazionale per l’Analisi Matematica, la Probabilit\`a e le loro Applicazioni (GNAMPA) of Istituto Nazionale di Alta Matematica (INdAM) and he was supported by PID2021-123405NB-I00 by
the Ministerio de Ciencia e Innovaci\'on. 

The second author is supported in part by the Generalitat de Catalunya (grant 2021 SGR 00071), the Spanish Ministerio de Ciencia e Innovaci\'on (project  PID2021-123151NB-I00) and the Spanish Research Agency (Mar\'ia de
Maeztu Program CEX2020-001084-M ).

The third author was partially supported by the Hellenic Foundation for Research and Innovation (H.F.R.I.) under the '2nd Call for H.F.R.I. Research Projects to support Faculty Members \& Researchers' (Project Number: 4662).

\bibliographystyle{alpha}
\bibliography{sample}
\medskip 
\end{document}